\newtheorem{teo}{Theorem}
\newtheorem{prop}{Proposition}
\newtheorem{cor}[teo]{Corollary}
\newtheorem{lema}[teo]{Lemma}
\newcommand{\CC}{\mathbb{C}}
\newcommand{\RR}{\mathbb{R}}
\newcommand{\ZZ}{\mathbb{Z}}
\newcommand{\NN}{\mathbb{N}}
\newcommand{\Ac}{\mathcal{A}}
\newcommand{\Bc}{\mathcal{B}}
\newcommand{\Hc}{\mathcal{H}}
\newcommand{\Tc}{\mathcal{T}}
\newcommand{\Lc}{\mathcal{L}}
\newcommand{\Dc}{\mathcal{D}}
\DeclareMathOperator*{\esup}{ess\,sup}
\DeclareMathOperator*{\einf}{ess\,inf}
\title{{\bf Modeling sampling in tensor products of unitary invariant subspaces}}
\author{
{\bf Antonio~G. Garc\'{\i}a}\thanks{E-mail:\texttt{agarcia@math.uc3m.es}}\,, \,\,
{\bf Alberto Ibort}\thanks{E-mail:\texttt{albertoi@math.uc3m.es}} \,\, 
{\bf and \, Mar\'{\i}a~J. Mu\~ noz-Bouzo}\thanks{E-mail:\texttt{mjmunoz@mat.uned.es}}. \,\,}
\date{}
\begin{document}
\maketitle
\begin{itemize}
\item[*\dag] Departamento de Matem\'aticas, Universidad Carlos III de Madrid,
 Avda. de la Universidad 30, 28911 Legan\'es-Madrid, Spain.
\item[\ddag] Departamento de Matem\'aticas Fundamentales, U.N.E.D., Senda del Rey 9, 28040 Madrid, Spain.
\end{itemize}
%%%%%%%%%%%%%%%%%%%%
\begin{abstract}
The use of unitary invariant subspaces of a Hilbert space $\Hc$ is nowadays a recognized fact in the treatment of sampling problems. Indeed, shift-invariant subspaces of $L^2(\RR)$ and also periodic extensions of finite signals are remarkable examples where this occurs. As a consequence, the availability of an abstract unitary sampling theory becomes a useful tool to handle these problems. In this paper we derive a sampling theory for tensor products of unitary invariant subspaces. This allows to merge the cases of finitely/infinitely generated unitary invariant subspaces formerly studied in the mathematical literature;  it also allows to introduce  the several variables case. As the involved samples are identified as frame coefficients in suitable tensor product spaces, the relevant mathematical technique is that of frame theory, involving both, finite/infinite dimensional cases.
\end{abstract}
%%%%%%%%%%%%%%%%%%%%%%%%%%%%%%%%%%%%%%%%%%%%%%%%%%%%%%%%%%%%%%%%%%%%%%
{\bf Keywords}: Unitary invariant subspaces; Tensor product of Hilbert spaces;  Frames and dual frames; Moore-Penrose pseudo-inverses. 

\noindent{\bf AMS}: 42C15; 94A20; 15A09.
%%%%%%%%%%%%%%%%%%%%%

\section{Introduction}
\label{section0}
%%%%%%%%%%%%%%%%%%%%%%
Sampling and reconstruction of functions in unitary invariant subspaces of a  separable Hilbert space brings a unified approach to sampling problems (see Refs. \cite{hector:13,hector:14,garcia:15,michaeli:11,pohl:12}). Indeed, it englobes the most usual sampling settings such as sampling in shift-invariant subspaces of $L^2(\RR)$ (see, for instance, Refs. \cite{aldroubi:02,aldroubi:01,ole:03,garcia:12,garcia:06,garcia:08,kang:11,qsun:10,sun:02,  sun:03,sun:99} and references therein), or sampling periodic extensions of finite signals (see Refs. \cite{hector:15,garcia:15}). 

In a recent paper \cite{garcia3:15} it was shown how to extend sampling reconstruction theorems to invariant subspaces of a separable Hilbert spaces under a unitary representation of finite groups which are semidirect products with an Abelian factor.   This setting being appropriate for applications of the theory beyond the domain of classical telecommunications to quantum physics.   

In this paper we go one step ahead by enlarging
the class of target spaces for sampling: we deal with tensor products of different unitary invariant subspaces. This situation corresponds for instance to consider multichannel systems in classical telecommunications or composite systems in the case of quantum applications.  Thus, in this setting, we are able to gather problems of diverse nature by means of a simple formalism involving tensor products and tensor operators in Hilbert spaces. Namely, we first consider an infinite $U$-unitary subspace 
\[
\Ac_a=\big\{\sum_{n\in \ZZ} a_n\, U^n a \,:\, \{a_n\}\in \ell^2(\ZZ)\big\}
\]
in a Hilbert space $\Hc_1$, and a finite $V$-unitary subspace 
\[
\Ac_b=\big\{\sum_{p=0}^{N-1} b_p\, V^p b \, : \, b_p\in \CC\big\}
\]
in a Hilbert space $\Hc_2$, to finally obtain sampling formulas in its tensor product $\Ac_{a,b}:=\Ac_a \otimes \Ac_b$.

Apart from tensor products in Hilbert spaces, the paper involves the theory of frames.  Concretely, in this situation, the generalized samples will be expressed as frame coefficients in an auxiliary Hilbert space $L^2(0,1)\otimes \ell^2_N(\ZZ)$, where $\ell^2_N(\ZZ)$ denotes the space of all $N$-periodic  complex sequences.  Continuing the line of inquiry of \cite{hector:14,garcia:15}, the problem reduces to find appropriate families of dual frames. By `appropriate" we mean that these dual frames have a nice structure taking care of the unitary invariance of the involved sampling subspaces as it will be discussed in the sequel.

Later on, the infinite-infinite and finite-finite generator cases will be considered too.   That is, the situation where both invariant subspaces are generated by a sequence of vectors $U^na$, $V^pb$, $n,p \in \ZZ$, and the simpler case where both subspaces are finite dimensional.   Relevant examples of each situation will be discussed in detail.   

%Poner $U$-invariant comienzos, sampling in shift-invariant spaces y finite sampling.

The paper is organized as follows: For the sake of completeness we include in Section \ref{section1} the basics of frames and tensor products needed in the sequel. In Section \ref{section2} we focus on the  above case that we call infinite-finite generators case; that is, we establish sampling formulas in the tensor product of two unitary invariant subspaces, one of them $\Ac_a$ with an infinite generator $a$ and the other one $\Ac_b$ finitely generated. First, we obtain appropriate expressions for the samples of any $x\in \Ac_{a,b}$ obtained from $S=ss'$ systems $\Lc_{jj'}$ acting on $\Ac_{a,b}$,
\[
\Lc_{jj'}x(rn,\bar{r}m):=\big\langle x, U^{rn}h_{j1}\otimes V^{\bar{r}m} h_{j'2}\big\rangle_{\Hc_1\otimes \Hc_2}\,,
\]
where $n\in \ZZ$, $m=0, 1,\dots, \ell-1$, $j=1,2,\dots, s$ and $j'=1,2,\dots, s'$.  Here,  $h_{j1}$, $j=1,2, \dots,s$, denote $s$ fixed elements in $\Hc_1$ and $h_{j'2}$, $j'=1,2,\dots, s'$, denote $s'$ fixed elements in $\Hc_2$,  ; $r$ and $\bar{r}$ the sampling periods, where $r\in \NN$ and $\bar{r}$ is a divisor  of $N$ and $\ell:=N/\bar{r}$ (see Section \ref{section2} for the details).
Then we state the suitable isomorphism $\Tc^{UV}_{a,b}$ between $L^2(0,1)\otimes \ell^2_N(\ZZ)$ and $\Ac_{a,b}$ which allows to transform the derived frame expansions in $L^2(0,1)\otimes \ell^2_N(\ZZ)$ into stable sampling formulas in  
$\Ac_{a,b}$ having the form 
\[
x=\sum_{j=1}^s \sum_{j'=1}^{s'}\sum_{n\in \ZZ} \sum_{m=0}^{\ell-1} \Lc_{jj'}x(rn,\bar{r}m)\, \big(U^{rn} \otimes V^{\bar{r}m} \big)\, ( c_j\otimes d_{j'})\,,
\]
where $c_j \otimes d_{j'} \in \Ac_{a,b}$, $j=1, 2, \dots, s$ and $j'=1,2,\dots, s'$. We conclude the section giving a representative example arising from classical tomography (see, for instance, Refs. \cite{natterer:01,stark:93}). 

Sections \ref{section3} and \ref{section4} deal with the called infinite-infinite and finite-finite cases. They mimic the structure of  Section \ref{section2} with auxiliary spaces $L^2(0,1)\otimes L^2(0,1)$ and $\ell^2_N(\ZZ)\otimes \ell^2_M(\ZZ)$, respectively.

Finally, it is worth to mention that for the sake of simplicity we only deal with the tensor product of two single generated unitary invariant subspaces; the same results apply for  the tensor product of any finite number of multiple generated unitary invariant subspaces.

\section{A brief on frames and tensor products}
\label{section1}
%%%%%%%%%%%%%%%%%%%%%%
The frame concept was introduced by Duffin and Shaeffer  in \cite{duffin:52} while studying some problems in nonharmonic Fourier series; some years later it was revived by Daubechies, Grossman and Meyer in  \cite{daubechies:85}. Nowadays, frames have become a tool in pure and applied  mathematics, computer science, physics and engineering used to derive redundant, yet stable decompositions of a signal for analysis or transmission, while also promoting sparse expansions. Recall that a sequence $\{x_n\}$ is a 
frame for a separable Hilbert space $\Hc$ if there exist two constants $A,B>0$ (frame bounds) 
such that
\[
A\|x\|^2 \leq \sum_n |\langle x, x_n \rangle|^2 \leq B \|x\|^2 \,\, \text{ 
for all } x\in \Hc \,.
\]
A sequence satisfying only the right-hand inequality is said to be a Bessel sequence for  $\Hc$.
Given a frame $\{x_n\}$ for $\Hc$ the representation 
property of any vector $x\in \Hc$ as a series $x=\sum_n c_n\, x_n$ is retained, 
but, unlike the case of Riesz bases, the uniqueness 
of this representation (for overcomplete frames) is sacrificed. 
Suitable frame coefficients 
$c_n$ which depend continuously and linearly on $x$ 
are obtained by using dual frames $\{y_n\}$ of $\{x_n\}$, i.e., 
$\{y_n\}$ is another frame for $\Hc$ such that for each $x\in \Hc$
\begin{equation}
\label{dual}
x=\sum_n \langle x, y_n \rangle \,x_n=\sum_n \langle x, x_n \rangle \,y_n \quad \text{in $\Hc$}\,.
\end{equation} 
Recall that a Riesz basis in a separable Hilbert space
$\Hc$ is the image of an orthonormal basis by means of a boundedly
invertible operator; it is a particular case of frame: the so called  exact frame. Any Riesz basis $\{x_n\}$ has a
unique biorthonormal (dual) Riesz basis $\{y_n\}$,
i.e., $\langle x_n , y_m \rangle_{\Hc}=\delta_{n,m}$, such that expansion \eqref{dual} holds for every $x \in \Hc$. A Riesz sequence is a Riesz basis for its closed span. For more details on frames and Riesz bases theory see, for instance, the monograph \cite{ole:03} and 
references therein; see also Ref.~\cite{casazza:14} for finite frames.

Traditionally, frames were used in signal and image processing, nonharmonic analysis, data compression, and sampling theory, but nowadays
frame theory plays also a fundamental role in a wide variety of problems in both pure  and applied  mathematics, computer science, physics and engineering. The redundancy of frames, which gives flexibility and robustness, is the key to their significance for applications; see, for instance, the nice introduction in Chapter 1 of Ref.~\cite{casazza:14} and the references therein. 

\bigskip

Next we briefly recall some basic facts about tensor products of Hilbert spaces which will be useful in the current work.
Let $\Hc_1$, $\Hc_2$ be two  Hilbert spaces.  Among the different ways of constructing tensor product spaces we adopt the model proposed in  \cite{folland:95}. There, the tensor product $\Hc_1\otimes\Hc_2$ is defined as the space of all antilinear maps $A\colon\Hc_2\longrightarrow\Hc_1$ such that $\sum_i\|A e_i\|^2 <\infty$ for some orthonormal basis of $\{e_i\}_i$ of  $\Hc_2$.  

As for every $A, B\in \Hc_1\otimes\Hc_2$ the series $\sum_i\|A e_i\|^2 $ and $\sum_i  \langle A e_i, B e_i \rangle$ are independent of the orthonormal  basis $\{e_i\}_i$ for $\Hc_2$, then $\Hc_1\otimes\Hc_2$ can be turned into an inner product space  by defining the norm $\|A\|^2= \sum_i\|A e_i\|^2 $ (and the associated inner product $\langle A, B \rangle=\sum_i  \langle A e_i, B e_i \rangle$). Indeed $\Hc_1\otimes\Hc_2$ endowed  with this inner product  becomes a Hilbert space. 

If $u \in \Hc_1$ and $v\in  \Hc_2$  the tensor product $u\otimes v\in \Hc_1\otimes\Hc_2$ is defined to be the rank one map such that $(u\otimes v) (w)= \langle v, w \rangle u$ for every $w \in \Hc_2$. 

Let $\Bc(\Hc_1)$, $\Bc(\Hc_2)$ and $\Bc(\Hc_1\otimes \Hc_2)$ denote the spaces of all    bounded linear operators on $\Hc_1$, $\Hc_2$ and $\Hc_1\otimes\Hc_2$, respectively. If $S\in\Bc(\Hc_1)$ and $T \in \Bc(\Hc_2)$  the tensor product $S\otimes  T \in \Bc(\Hc_1\otimes \Hc_2)$  is defined to be the bounded linear operator on $\Hc_1\otimes \Hc_2$  such that $(S\otimes  T) (A)= S A T^*$ for every $A \in \Hc_1\otimes \Hc_2$.

For further information  on tensor products of Hilbert spaces see \cite{bourou:08,folland:95, khosravi:03, kubrusly: 06}. In these sources are to be found the following results for tensor products needed in the sequel:

\medskip

{\it
\noindent {\scriptsize $\bullet$} $\|u\otimes v\|= \|u\|\ \|v\|$ and 
$\langle u \otimes v , u' \otimes v' \rangle= \langle u, u' \rangle\, \langle v, v' \rangle$ for any 
$u, u' \in \Hc_1 $ and $v, v' \in \Hc_2 $.

\smallskip

\noindent {\scriptsize $\bullet$}  $\| S\otimes  T\|= \|S \| \ \|T\|$ and $(S\otimes  T)(u \otimes v)= Su\otimes Tv$ for any $S\in\Bc(\Hc_1)$,  $T \in \Bc(\Hc_2)$, $u \in \Hc_1 $ and $v \in \Hc_2 $.

\smallskip

\noindent {\scriptsize $\bullet$} The linear span of $\{u \otimes v \colon u \in \Hc_1, \ v\in  \Hc_2\}$ is dense in $\Hc_1\otimes\Hc_2$.

\smallskip

\noindent {\scriptsize $\bullet$} The tensor product of two orthonormal bases is   an orthonormal basis.

\smallskip

\noindent {\scriptsize $\bullet$} The operator $S\otimes  T$ is invertible in  $\Bc(\Hc_1\otimes \Hc_2)$ if and only each operator, $S$ and $T$,  is invertible in $\Bc(\Hc_1)$ and $\Bc(\Hc_2)$ respectively.

\smallskip

\noindent {\scriptsize $\bullet$} The tensor product of two sequences  is a Riesz basis for $\Hc_1\otimes\Hc_2$ if and only if each sequence is a Riesz basis for its corresponding Hilbert space.

\smallskip

\noindent {\scriptsize $\bullet$} The tensor product of two Bessel sequences is a Bessel sequence for the corresponding Hilbert space.

\smallskip

\noindent {\scriptsize $\bullet$} The tensor product of two sequences  is a frame for $\Hc_1\otimes\Hc_2$ if and only if each sequence is a frame for its corresponding Hilbert space. }

\smallskip

Note that the fact that  $\Hc_1\otimes\Hc_2$ is  the completion of the linear span of $\Dc:=\{u \otimes v \colon u \in \Hc_1, \ v\in  \Hc_2\}$ yields that any operator of $\Bc(\Hc_1\otimes \Hc_2)$, in particular  the tensor product $S\otimes  T \in \Bc(\Hc_1\otimes \Hc_2)$, is determined by its values on $\Dc$. 

\smallskip

Finally, let $(X,\mu)$ and $(Y,\nu)$  denote two $\sigma$-finite measure spaces, then we have that $L^2(\mu)\otimes L^2(\nu)=L^2(\mu \times \nu)$ via the identification $(f\otimes g)(x, y)=f(x)g(y)$.

%%%%%%%%%%%%%%%%%%%%%%%%%%%%%
\section{Infinite-finite generators case}
\label{section2}
%%%%%%%%%%%%%%%%%%%%%%%%%%%%%%%%%%%
Let $\Hc_1$, $\Hc_2$ be two separable Hilbert spaces, and $U:\Hc_1 \longrightarrow \Hc_1$, $V:\Hc_2 \longrightarrow \Hc_2$ two unitary operators. Consider two elements $a\in \Hc_1$ and $b\in \Hc_2$ such that the sequence $\{U^n a\}_{n\in \ZZ}$ forms a Riesz sequence in $\Hc_1$ (see \cite[Theorem 2.1]{hector:14} for a necessary and sufficient condition), and there exists an $N\in \NN$ such that $V^N b=b$, being the set $\big\{b, Vb, V^2b,\dots,V^{N-1}b\big\}$ linearly independent in $\Hc_2$ (see \cite[Proposition 1]{garcia:15} for a necessary and sufficient condition). In the tensor product Hilbert space $\Hc_1\otimes \Hc_2$ we consider the closed subspace
\[
\Ac_{a,b}:=\overline{{\rm span}}_{\Hc_1\otimes \Hc_2} \big\{U^na\otimes V^pb\big\}_{n\in \ZZ;\,p=0,1,\dots, N-1}\,.
\]
Since the sequence $\big\{U^na\otimes V^pb\big\}_{n\in \ZZ;\,p=0,1,\dots, N-1}$ is a Riesz basis for the tensor product 
$\Ac_a \otimes \Ac_b$ of the $U$-invariant  subspace  $\Ac_a=\big\{\sum_{n\in \ZZ} a_n\, U^n a \,:\, \{a_n\}\in \ell^2(\ZZ)\big\}$ of $\Hc_1$ and the $V$-invariant subspace $\Ac_b=\big\{\sum_{p=0}^{N-1} b_p\, V^p b \, : \, b_p\in \CC\big\}$ of $\Hc_2$ we deduce that $\Ac_{a,b}=\Ac_a \otimes \Ac_b$,
and it can be described as
\[
\Ac_{a,b}=\Big\{\sum_{n\in \ZZ} \sum_{p=0}^{N-1} a_{np}\, U^na\otimes V^pb \,:\, \{a_{np}\}_n \in \ell^2(\ZZ)\,,\,p=0,1,\dots, N-1 \Big\}\,.
\]
We will refer to the vectors $\{a, b\}$ as the infinite-finite generators of the subspace $\Ac_{a,b}$ in $\Hc_1\otimes \Hc_2$.

\subsection*{The samples}

Fix $S=ss'$ elements $h_{jj'}:=h_{j1}\otimes h_{j'2} \in \Hc_1\otimes \Hc_2$, where $j=1,2, \dots,s$ and $j'=1,2, \dots,s'$. Consider two sampling periods $r$ and $\bar{r}$, where $r\in \NN$ and $\bar{r}$ is a divisor  of $N$; denote $\ell:=N/\bar{r}$. For each $x\in \Ac_{a,b}$ we introduce  the sequence of its generalized samples
\[
\Big\{\Lc_{jj'}x(rn,\bar{r}m) \Big\}_{\substack{n\in \ZZ;\,m=0,1, \dots, \ell-1 \\ j=1,2,\ldots,s;\,j'=1,2,\ldots,s'}}
\]
defined by
\begin{equation}
\label{samples}
\Lc_{jj'}x(rn,\bar{r}m):=\big\langle x, U^{rn}h_{j1}\otimes V^{\bar{r}m} h_{j'2}\big\rangle_{\Hc_1\otimes \Hc_2}\,,
\end{equation}
where $n\in \ZZ$, $m=0, 1,\dots, \ell-1$, $j=1,2,\dots, s$ and $j'=1,2, \dots,s'$. We will refer to any $\Lc_{jj'}$ as a $UV$-system acting on the subspace $\Ac_{a,b}$.

\subsection*{The isomorphism $\Tc^{UV}_{a,b}$}
%%%%%%%%%%%%%%%%%%%%%%%%%%%
Let $\ell^2_N(\ZZ)$ denote the space of all $N$-periodic sequences with inner product 
$\langle \mathbf{x}, \mathbf{y} \rangle_{\ell^2_N}=\sum_{p=0}^{N-1} x(p)\,\overline{y(p)}$, and its canonical basis as 
$\{\mathbf{e}_p\}_{p=0}^{N-1}$. This space is isomorphic to the euclidean space $\CC^N$; along the paper we will identify sequences in $\ell^2_N(\ZZ)$ with vectors in $\CC^N$: any vector in $\CC^N$ defines the terms from $0$ to $N-1$ of the corresponding sequence in $\ell^2_N(\ZZ)$.

We introduce the isomorphism $\Tc^{UV}_{a,b}$ which maps the canonical orthonormal basis $\big\{{\rm e}^{2\pi i nx}\otimes \mathbf{e}_p\big\}_{n\in \ZZ;\,p=0,1,\dots, N-1}$ for the Hilbert space $L^2(0,1)\otimes \ell^2_N(\ZZ)$ onto the Riesz basis 
$\big\{U^na\otimes V^pb \big\}_{n\in \ZZ;\,p=0,1,\dots, N-1}$ for $\Ac_{a,b}$. That is,
\begin{equation*}
\label{iso1}
\begin{array}[c]{ccll}
\Tc^{UV}_{a,b}:& L^2(0,1)\otimes \ell^2_N(\ZZ)& \longrightarrow & \Ac_{a,b}\\
      & {\rm e}^{2\pi i nx}\otimes \mathbf{e}_p & \longmapsto & U^na\otimes V^pb\,,
\end{array}
\end{equation*}
where $n\in \ZZ$ and $p=0,1,\dots, N-1$. 

\medskip 

It is clear  that $\Tc^{UV}_{a,b}=\Tc^U_a \otimes \Tc^V_{b,N}$, where $\Tc^U_a$ and $\Tc^V_{b,N}$ denote the isomorphisms
\[
\begin{array}[c]{ccll}
\Tc_a^U: & L^2(0, 1) & \longrightarrow & \Ac_a\\
        & {\rm e}^{2\pi i nx} & \longmapsto & U^n a  
\end{array} \quad \text{and} \quad       
        \begin{array}[c]{ccll}
\Tc^V_{b,N}: & \ell^2_N(\ZZ) & \longrightarrow & \mathcal{A}_b\\
        & \mathbf{e}_p & \longmapsto & V^p b\,.
\end{array}
\]
The following shifting property will be crucial later in obtaining our sampling formulas:
\begin{lema}
\label{shift}
Let $g\in L^2(0, 1)$ and $\mathbb{T}_0=\big(T(0), T(1), \dots, T(N-1)\big)\in \ell^2_N(\ZZ)$. For $M\in \ZZ$ and $1\le q\le N-1$ the shifting property
\[
\Tc^{UV}_{a,b}\big( g(x){\rm e}^{2\pi i Mx}\otimes \mathbb{T}_{N-q}\big)=\big(U^M\otimes V^q\big)\,\Tc^{UV}_{a,b}(g\otimes \mathbb{T}_0)
\]
holds, where $\mathbb{T}_{N-q}$ denotes the sequence obtained from $\mathbb{T}_0$ as 
\[
\mathbb{T}_{N-q}:=\big(T(N-q), T(N-q+1), \dots, T(N-q+N-1)\big)\,.
\]
\end{lema}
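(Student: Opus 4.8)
The plan is to exploit the tensor factorization $\Tc^{UV}_{a,b}=\Tc^U_a\otimes\Tc^V_{b,N}$ together with the identity $(S\otimes T)(u\otimes v)=Su\otimes Tv$ recalled in Section~\ref{section1}. Writing the input as the pure tensor $g(x){\rm e}^{2\pi iMx}\otimes\mathbb{T}_{N-q}$, the whole statement splits into two independent one-variable shifting properties, one in each factor:
\[
\Tc^U_a\big(g(x){\rm e}^{2\pi iMx}\big)=U^M\,\Tc^U_a(g)\quad\text{and}\quad\Tc^V_{b,N}\big(\mathbb{T}_{N-q}\big)=V^q\,\Tc^V_{b,N}(\mathbb{T}_0)\,.
\]
Once both are established, a single application of $(U^M\otimes V^q)(u\otimes v)=U^Mu\otimes V^qv$ reassembles the claim.

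For the $L^2(0,1)$ factor I would first note that multiplication by ${\rm e}^{2\pi iMx}$ is a bounded (unitary) operator on $L^2(0,1)$ and that $\Tc^U_a$ and $U^M$ are bounded; hence both sides of the first identity are bounded linear operators of $g$, and it suffices to check them on the orthonormal basis $\{{\rm e}^{2\pi inx}\}_{n\in\ZZ}$. There the verification is immediate: ${\rm e}^{2\pi inx}{\rm e}^{2\pi iMx}={\rm e}^{2\pi i(n+M)x}$ is sent by $\Tc^U_a$ to $U^{n+M}a=U^M(U^na)=U^M\Tc^U_a({\rm e}^{2\pi inx})$. Linearity and continuity then extend the equality to all $g\in L^2(0,1)$. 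Thus multiplication by a character in the $L^2(0,1)$ model corresponds exactly to applying the power $U^M$ in $\Ac_a$.

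For the $\ell^2_N(\ZZ)$ factor everything is a finite computation. Expanding in the canonical basis, $\Tc^V_{b,N}(\mathbb{T}_0)=\sum_{p=0}^{N-1}T(p)\,V^pb$, while the $p$-th entry of $\mathbb{T}_{N-q}$ is $T(N-q+p)$, so $\Tc^V_{b,N}(\mathbb{T}_{N-q})=\sum_{p=0}^{N-1}T(N-q+p)\,V^pb$. Reindexing by $k\equiv N-q+p\ (\mathrm{mod}\ N)$, so that $p\equiv k+q\ (\mathrm{mod}\ N)$, and invoking the periodicity relation $V^Nb=b$ to identify $V^{(k+q)\,\mathrm{mod}\,N}b$ with $V^{k+q}b$, the sum becomes $\sum_{k=0}^{N-1}T(k)\,V^{k+q}b=V^q\sum_{k=0}^{N-1}T(k)\,V^kb=V^q\Tc^V_{b,N}(\mathbb{T}_0)$.

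The only delicate point is the bookkeeping in this last factor: one must track the index shift in the correct direction, which is precisely why the cyclically shifted sequence appears as $\mathbb{T}_{N-q}$ rather than $\mathbb{T}_q$, and one must use $V^Nb=b$ to absorb the wrap-around when $k+q\ge N$. Modulo this care, the lemma follows directly from the tensor structure of $\Tc^{UV}_{a,b}$ and the modulation-to-shift behavior in each factor.
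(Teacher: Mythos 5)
Your proof is correct and takes essentially the same route as the paper's: both factor $\Tc^{UV}_{a,b}=\Tc^U_a\otimes\Tc^V_{b,N}$, reduce the claim to the two one-factor shifting properties, and reassemble via $(U^M\otimes V^q)(u\otimes v)=U^Mu\otimes V^qv$. The only difference is that the paper imports the two factor identities by citation to earlier work, whereas you verify them directly; your orthonormal-basis check in $L^2(0,1)$ and your modular reindexing in $\ell^2_N(\ZZ)$ using $V^Nb=b$ (including the correct appearance of $\mathbb{T}_{N-q}$ rather than $\mathbb{T}_q$) are both sound.
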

\begin{proof}
Indeed, having in mind the shifting properties: $\Tc^U_a (g(x){\rm e}^{2\pi i Mx})=U^M(\Tc^U_a g)$ for $\Tc^U_a$ (see \cite[Eq.(2.1)]{hector:14}) and 
$\Tc^V_{b,N} (\mathbb{T}_{N-q})=V^q(\Tc^V_{b,N} \mathbb{T}_0)$ for $\Tc^V_{b,N}$  (see \cite[Prop.2]{garcia:15}), and the properties of the tensor product of operators we have 
\[
\begin{split}
\Tc^{UV}_{a,b}\big( g(x){\rm e}^{2\pi i Mx}\otimes \mathbb{T}_{N-q}\big)&=\Tc^U_a (g(x){\rm e}^{2\pi i Mx})\otimes \Tc^V_{b,N} (\mathbb{T}_{N-q})=
U^M(\Tc^U_a g)\otimes V^q(\Tc^V_{b,N} \mathbb{T}_0) \\
&= \big(U^M\otimes V^q\big) \,(\Tc^U_a g \otimes \Tc^V_{b,N} \mathbb{T}_0)=\big(U^M\otimes V^q\big) \,\Tc^{UV}_{a,b}(g\otimes \mathbb{T}_0)\,.
\end{split}
\]
\end{proof}

\subsection*{An expression for the samples}
%%%%%%%%%%%%%%%%%%%%%%%
A suitable expression for the samples given in \eqref{samples} will allow us to obtain the reconstruction conditions of any 
$x$ in the subspace $\Ac_{a,b}$ from the sequence of its samples \eqref{samples}. Indeed, for 
$x=\sum_{k\in \ZZ} \sum_{p=0}^{N-1} a_{kp}\, U^ka\otimes V^pb$ in $\Ac_{a,b}$ we have
\[
\begin{split}
\Lc_{jj'}x(rn,\bar{r}m)&=\Big\langle \sum_{k\in \ZZ} \sum_{p=0}^{N-1} a_{kp}\, U^ka\otimes V^pb, U^{rn}h_{j1}\otimes V^{\bar{r}m}h_{j'2} \Big\rangle_{\Hc_1\otimes \Hc_2} \\
&=\sum_{k\in \ZZ} \sum_{p=0}^{N-1} a_{kp}\, \big\langle U^ka\otimes V^pb, U^{rn}h_{j1}\otimes V^{\bar{r}m}h_{j'2} \big\rangle_{\Hc_1\otimes \Hc_2}
\end{split}
\]
Now, consider $F:=\sum_{k\in \ZZ} \sum_{p=0}^{N-1} a_{kp}\, {\rm e}^{2\pi i kx}\otimes \mathbf{e}_p$, the element in $L^2(0,1)\otimes \ell^2_N(\ZZ)$ such that $\Tc^{UV}_{a,b} F=x$. Thus, Plancherel identity for orthonormal bases gives
\begin{equation*}
\begin{split}
\Lc_{jj'}x(rn,\bar{r}m)&=\Big\langle F, \sum_{k\in \ZZ} \sum_{p=0}^{N-1}\langle \overline{a, U^{rn-k} h_{j1}}\rangle_{\Hc_1}\, \langle \overline{b, V^{\bar{r}m-p} h_{j'2}}\rangle_{\Hc_2} \, {\rm e}^{2\pi i kx}\otimes \mathbf{e}_p\Big\rangle_{L^2(0,1)\otimes \ell^2_N(\ZZ)} \\ 
=&\Big\langle F, \big(\sum_{k\in \ZZ}\langle \overline{a, U^{rn-k} h_{j1}}\rangle_{\Hc_1}\,{\rm e}^{2\pi i kx} \big)\otimes \big( \sum_{p=0}^{N-1} \langle \overline{b, V^{\bar{r}m-p} h_{j'2}}\rangle_{\Hc_2}\,\mathbf{e}_p \big)\Big\rangle_{L^2(0,1)\otimes \ell^2_N(\ZZ)} \\ 
\end{split}
\end{equation*}
That is, for $n\in \ZZ$, $m=0, 1, \dots, \ell-1$, $j=1, 2, \dots, s$ and $j'=1, 2, \dots, s'$ we have got the following expression for the samples:
\begin{equation}
\label{sampexpression}
\Lc_{jj'}x(rn,\bar{r}m)=\Big\langle F,  \overline{g_j(x)}\,{\rm e}^{2\pi i rnx} \otimes \mathbb{G}_{j',m} \Big\rangle_{L^2(0,1)\otimes \ell^2_N(\ZZ)}
\end{equation}
where $\Tc^{UV}_{a,b} F=x$, the functions 
\begin{equation}
\label{gj}
g_j(x):=\sum_{k\in \ZZ} \langle a, U^k h_{j1} \rangle_{\Hc_1}\,{\rm e}^{2\pi i kx}\,, \quad j=1, 2, \dots, s
\end{equation}
belong to $L^2(0,1)$ (recall that $\{U^n a\}_{n\in \ZZ}$ is a Riesz sequence for $\Hc_1$), and
\[
\mathbb{G}_{j',m}:=\sum_{p=0}^{N-1} \langle \overline{b, V^{\bar{r}m-p} h_{j'2}}\rangle_{\Hc_2}\,\mathbf{e}_p=\sum_{p=0}^{N-1} \overline{R_{b,h_{j'2}}(N+p-\bar{r}m)}\, \mathbf{e}_p\,, \quad j'=1, 2, \dots, s'\,,
\]
where 
\[
R_{b,h_{j'2}}(q):=\langle V^q b, h_{j'2}\rangle_{\Hc_2}\,, \quad q\in \ZZ\,,
\] 
denotes the ($N$-periodic) {\em cross-covariance} sequence between the sequences  $\{V^qb\}_{q\in \ZZ}$ and $\{V^qh_{j'2}\}_{q\in \ZZ}$ in $\Hc_2$ (see \cite{kolmogorov:41}). Thus, we deduce the following result:
\begin{prop}
Any element $x\in \Ac_{a,b}$ can be recovered from the sequence of its samples 
$\Big\{\Lc_{jj'}x(rn,\bar{r}m) \Big\}_{\substack{n\in \ZZ;\,m=0,1, \dots, \ell-1 \\ j=1,2,\ldots,s;\,j'=1,2,\ldots,s'}}
$ in a stable way (by means of a frame expansion) if and only if the sequence $\Big\{\overline{g_j(x)}\,{\rm e}^{2\pi i rnx} \otimes \mathbb{G}_{j',m} \Big\}_{\substack{n\in \ZZ;\,m=0,1, \dots, \ell-1 \\ j=1,2,\ldots,s;\,j'=1,2,\ldots,s'}}$ is a frame for $L^2(0,1)\otimes \ell^2_N(\ZZ)$.
\end{prop}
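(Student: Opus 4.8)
The plan is to obtain the statement almost directly from the sample expression \eqref{sampexpression}, using that $\Tc^{UV}_{a,b}$ is a \emph{topological} isomorphism (bounded and boundedly invertible, being the map sending an orthonormal basis onto a Riesz basis). Write $F=(\Tc^{UV}_{a,b})^{-1}x\in L^2(0,1)\otimes\ell^2_N(\ZZ)$ and abbreviate the family under discussion by $\phi_{jj'nm}:=\overline{g_j(x)}\,{\rm e}^{2\pi i rnx}\otimes\mathbb{G}_{j',m}$, with the multi-index $(j,j',n,m)$ running over $j=1,\dots,s$, $j'=1,\dots,s'$, $n\in\ZZ$ and $m=0,\dots,\ell-1$. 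Then \eqref{sampexpression} says precisely that $\Lc_{jj'}x(rn,\bar{r}m)=\langle F,\phi_{jj'nm}\rangle$; that is, the sampling analysis map $x\mapsto\{\Lc_{jj'}x(rn,\bar{r}m)\}$ is the composition of the isomorphism $x\mapsto F$ with the frame analysis operator of the family $\{\phi_{jj'nm}\}$ in the auxiliary space.

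First I would fix the meaning of stable recovery: by standard frame theory, the samples recover every $x\in\Ac_{a,b}$ in a stable way through a frame expansion exactly when there are constants $0<A\le B<\infty$ such that $A\|x\|^2\le\sum_{j,j',n,m}|\Lc_{jj'}x(rn,\bar{r}m)|^2\le B\|x\|^2$ for all $x\in\Ac_{a,b}$, i.e.\ when the analysis vectors $\{P(U^{rn}h_{j1}\otimes V^{\bar{r}m}h_{j'2})\}$ form a frame for $\Ac_{a,b}$, with $P$ the orthogonal projection onto $\Ac_{a,b}$. The core step is then to transport these inequalities across $\Tc^{UV}_{a,b}$. Since $\Tc^{UV}_{a,b}$ is a topological isomorphism there exist $0<\kappa\le K<\infty$ with $\kappa\|F\|\le\|\Tc^{UV}_{a,b}F\|\le K\|F\|$ for every $F$; combining this with $\|x\|=\|\Tc^{UV}_{a,b}F\|$ and the identity $\sum_{j,j',n,m}|\Lc_{jj'}x(rn,\bar{r}m)|^2=\sum_{j,j',n,m}|\langle F,\phi_{jj'nm}\rangle|^2$ shows that the two sets of frame inequalities hold simultaneously, the bounds changing only by the factors $\kappa^2$ and $K^2$. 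This yields both implications: the samples recover every $x\in\Ac_{a,b}$ stably if and only if $\{\phi_{jj'nm}\}$ is a frame for $L^2(0,1)\otimes\ell^2_N(\ZZ)$.

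To exhibit the reconstruction in the nontrivial direction, I would assume $\{\phi_{jj'nm}\}$ is a frame and pick any dual frame $\{\psi_{jj'nm}\}$ in the auxiliary space; then $F=\sum_{j,j',n,m}\langle F,\phi_{jj'nm}\rangle\,\psi_{jj'nm}=\sum_{j,j',n,m}\Lc_{jj'}x(rn,\bar{r}m)\,\psi_{jj'nm}$, and applying $\Tc^{UV}_{a,b}$ gives the stable sampling formula $x=\sum_{j,j',n,m}\Lc_{jj'}x(rn,\bar{r}m)\,\Tc^{UV}_{a,b}\psi_{jj'nm}$ with reconstruction vectors in $\Ac_{a,b}$. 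I expect no genuine analytic obstacle: the whole content lies in recognizing, via \eqref{sampexpression}, that the samples are the frame coefficients of $F$, together with the fact that a topological isomorphism preserves the frame property with quantitatively controlled bounds. The only points requiring care are bookkeeping ones—keeping the fourfold index and the $N$-periodic finite factor $\ell^2_N(\ZZ)$ straight, and noting that the upper (Bessel) bound is built into either side of the equivalence and so need not be argued separately.
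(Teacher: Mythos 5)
Your proposal is correct and takes essentially the same route as the paper, which deduces the proposition directly from the identity \eqref{sampexpression}: the samples of $x$ are exactly the frame coefficients $\langle F,\cdot\rangle$ of $F=\big(\Tc^{UV}_{a,b}\big)^{-1}x$ against the family in question, and the frame property transfers across the bounded, boundedly invertible isomorphism $\Tc^{UV}_{a,b}$. Your write-up merely makes explicit what the paper leaves implicit---the quantitative transfer of the frame bounds via $\kappa\|F\|\le\|x\|\le K\|F\|$ and the dual-frame reconstruction step---so there is nothing to correct.
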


Having in mind that a tensor product of two sequences is a frame in the tensor product if and only if the respective factors are frames, we only need a  characterization of the sequences 
$\big\{ \overline{g_j(x)}\,{\rm e}^{2\pi {\rm i} rnx} \big\}_{n\in \mathbb{Z};\,  j=1,2,\dots, s}$ and 
$\big\{ \mathbb{G}_{j',m}\big\}_{\substack{m=0,1,\ldots,\ell-1\\ j'=1,2,\ldots,s'}}$ as frames for $L^2 (0, 1)$ and $\ell^2_N(\ZZ)$ respectively. This study has been done in Refs. \cite{garcia:06,garcia:15} respectively. 

\medskip

\noindent $\bullet$ For the first one, consider the $s\times r$ matrix of  functions in $L^2(0, 1)$
\begin{equation}
\label{Gmatrix} 
\mathbf{G}(x):=
\begin{pmatrix} g_1(x)& g_1(x+\frac{1}{r})&\cdots&g_1(x+\frac{r-1}{r})\\
g_2(x)& g_2(x+\frac{1}{r})&\cdots&g_2(x+\frac{r-1}{r})\\
\vdots&\vdots&&\vdots\\
g_s(x)& g_s(x+\frac{1}{r})&\cdots&g_s(x+\frac{r-1}{r})
\end{pmatrix}= 
\bigg(g_j\Big(x+\frac{k-1}{r}\Big)\bigg)_{\substack{j=1,2,\ldots,s \\ k=1,2,\ldots, r}}
\end{equation}
and its related constants 
\[
\alpha_{\mathbf{G}}:=\einf_{x \in (0,1/r)}\lambda_{\min}[\mathbf{G}^*(x)\mathbf{G}(x)],\quad
\beta_{\mathbf{G}}:=\esup_{x \in (0,1/r)}\lambda_{\max}[\mathbf{G}^*(x)\mathbf{G}(x)]\,,
\]
where $\mathbf{G}^*(x)$ denotes the transpose conjugate of the matrix 
$\mathbf{G}(x)$, and $\lambda_{\min}$ (respectively $\lambda_{\max}$) the 
smallest (respectively the largest) eigenvalue of the positive semidefinite matrix 
$\mathbf{G}^*(x)\mathbf{G}(x)$. Observe that 
$0 \leq \alpha_{\mathbf{G}} \leq \beta_{\mathbf{G}} \leq \infty$.
Notice that in the definition of the matrix $\mathbf{G}(x)$ we are considering $1$-periodic 
extensions of the involved functions $g_j$, \ $j=1,2,\ldots,s$. 

A complete characterization of the sequence $\big\{ \overline{g_j(x)}\, {\rm e}^{2\pi {\rm i} rnx} \big\}_{n\in \mathbb{Z};\,  j=1,2,\dots, s}$ as a frame or a Riesz basis for $L^2(0, 1)$ is given in next lemma (see \cite[Lemma 3]{garcia:06} or \cite[Lemma 2]{garcia:08} for the proof):
\begin{lema}
\label{l2}
\label{caracterizacion}
For the functions $g_{j}\in L^2(0,1)$, $j=1,2,\ldots,s$, consider the associated matrix $\mathbf{G}(x)$ given in \eqref{Gmatrix}. Then, the following results hold:
\begin{enumerate}[(a)]
\item The sequence $\{\overline{g_{j}(x)}\,{\rm e}^{2\pi {\rm i} rnx}\}_{n\in\mathbb{Z};\,j=1,2,\ldots,s}$ is a complete system for $L^2(0,1)$ if and only if the rank of the matrix $\mathbf{G}(x)$ is $r$ a.e. in $(0,1/r)$.
\item The sequence
$\{\overline{g_{j}(x)}\,{\rm e}^{2\pi {\rm i} rnx}\}_{n\in\mathbb{Z};\,j=1,2,\ldots,s}$
is a Bessel sequence for  $L^2(0,1)$ if and only if $g_{j}\in
L^\infty(0,1)$ (or equivalently $\beta_{\mathbf{G}}<\infty$). In this
case, the optimal Bessel bound is $\beta_{\mathbf{G}}/r$. 
\item The sequence $\{\overline{g_{j}(x)}\,{\rm e}^{2\pi {\rm i} rnx}\}_{n\in\mathbb{Z};\,j=1,2,\ldots,s}$ is a frame for
$L^2(0,1)$ if and only if $0<\alpha_{\mathbf{G}}\le \beta_{\mathbf{G}}<\infty$. In this case, the optimal frame bounds are
$\alpha_{\mathbf{G}}/r$ and  $\beta_{\mathbf{G}}/r$.
\item  The sequence $\{\overline{g_{j}(x)}\,{\rm e}^{2\pi {\rm i} rnx}\}_{n\in\mathbb{Z};\,j=1,2,\ldots,s}$ is a Riesz basis for $L^2(0,1)$ if and only if it is a frame and $s=r$.
\end{enumerate}
\end{lema}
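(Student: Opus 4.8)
The plan is to diagonalize the analysis operator of the system by passing to a vector-valued (polyphase) model. First I would introduce the isometric isomorphism
\[
\mathcal{U}\colon L^2(0,1)\longrightarrow L^2\big((0,1/r),\CC^r\big), \quad (\mathcal{U}f)(t)=\big(f(t),f(t+\tfrac1r),\dots,f(t+\tfrac{r-1}{r})\big)^{\!\top},
\]
which is onto and norm-preserving because $\int_0^1|f|^2=\sum_{k=0}^{r-1}\int_0^{1/r}|f(t+k/r)|^2\,dt$. The decisive computation is to locate the images of the members of the system. Since $\mathrm{e}^{2\pi \mathrm{i} n k}=1$ for $n,k\in\ZZ$, the $i$-th component of $\mathcal{U}\big(\overline{g_j}\,\mathrm{e}^{2\pi \mathrm{i} rn\cdot}\big)(t)$ equals $\overline{g_j(t+(i-1)/r)}\,\mathrm{e}^{2\pi \mathrm{i} rnt}$; that is, the image is $\mathrm{e}^{2\pi \mathrm{i} rnt}\,\overline{\mathbf{g}_j(t)}$, where $\mathbf{g}_j(t)$ denotes the $j$-th row of $\mathbf{G}(t)$ read as a column vector. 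Hence the original family is complete / Bessel / a frame for $L^2(0,1)$ if and only if $\{\mathrm{e}^{2\pi \mathrm{i} rnt}\,\overline{\mathbf{g}_j(t)}\}_{n,j}$ is so for $L^2\big((0,1/r),\CC^r\big)$.

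Next I would compute the quadratic form of the analysis operator in the transformed picture. Using that $\{\sqrt r\,\mathrm{e}^{2\pi \mathrm{i} rnt}\}_{n\in\ZZ}$ is an orthonormal basis of $L^2(0,1/r)$ and that $\sum_i f_i(t)\,g_j(t+(i-1)/r)=(\mathbf{G}(t)\mathbf{f}(t))_j$, Parseval's identity applied in the $n$-variable gives, for every $\mathbf{f}\in L^2\big((0,1/r),\CC^r\big)$,
\[
\sum_{n,j}\big|\langle \mathbf{f},\mathrm{e}^{2\pi \mathrm{i} rn\cdot}\overline{\mathbf{g}_j}\rangle\big|^2=\frac1r\int_0^{1/r}\big\|\mathbf{G}(t)\mathbf{f}(t)\big\|_{\CC^s}^2\,dt=\frac1r\int_0^{1/r}\mathbf{f}(t)^*\,\mathbf{G}^*(t)\mathbf{G}(t)\,\mathbf{f}(t)\,dt .
\]
All four assertions then follow from pointwise eigenvalue estimates for the positive semidefinite $r\times r$ matrix $\mathbf{G}^*(t)\mathbf{G}(t)$. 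Bounding the integrand between $\lambda_{\min}[\mathbf{G}^*\mathbf{G}]\,\|\mathbf{f}(t)\|^2$ and $\lambda_{\max}[\mathbf{G}^*\mathbf{G}]\,\|\mathbf{f}(t)\|^2$ and optimizing over $t$ yields (b) the Bessel characterization with optimal bound $\beta_{\mathbf{G}}/r$ (with $g_j\in L^\infty$ equivalent to $\beta_{\mathbf{G}}<\infty$ via the diagonal entries of $\mathbf{G}^*\mathbf{G}$) and (c) the frame characterization with optimal bounds $\alpha_{\mathbf{G}}/r$ and $\beta_{\mathbf{G}}/r$. For (a), the quadratic form vanishes exactly when $\mathbf{G}(t)\mathbf{f}(t)=0$ a.e., so completeness is equivalent to the implication ``$\mathbf{f}(t)\in\ker\mathbf{G}(t)$ a.e.\ $\Rightarrow \mathbf{f}=0$'', which holds iff $\mathrm{rank}\,\mathbf{G}(t)=r$ a.e.

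Finally, for (d) I would use the standard fact that a frame is a Riesz basis iff its synthesis operator is injective. If the system is a frame then $\alpha_{\mathbf{G}}>0$, so $\mathbf{G}^*(t)\mathbf{G}(t)$ is positive definite a.e., forcing $\mathrm{rank}\,\mathbf{G}(t)=r$ and hence $s\ge r$; if $s>r$ the rows of $\mathbf{G}(t)$ are linearly dependent for a.e.\ $t$, and a measurable choice of a nonzero dependence vector produces nonzero $\ell^2$ coefficients annihilated by the synthesis operator, so the frame is not a Riesz basis. Thus a Riesz basis forces $s=r$. Conversely, a frame with $s=r$ has $\mathbf{G}(t)$ square and invertible a.e., which makes the synthesis operator injective, so the frame is a Riesz basis. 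The eigenvalue bookkeeping is routine; the main obstacles are the two measurable-selection arguments — constructing a nonzero kernel element to certify failure of completeness in (a) and a nonzero dependence element to certify failure of injectivity in (d) — together with the verification that the essential inf/sup of the pointwise eigenvalues are genuinely the \emph{optimal} frame constants.
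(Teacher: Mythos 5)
Your proposal is correct and is essentially the argument this paper relies on: the lemma is not proved here but quoted from \cite[Lemma 3]{garcia:06} and \cite[Lemma 2]{garcia:08}, whose proofs use precisely your polyphase isometry $f\mapsto\big(f(\cdot+\tfrac{k}{r})\big)_{k=0}^{r-1}$ together with the Parseval identity $\sum_{n,j}\big|\langle F,\overline{g_j}\,{\rm e}^{2\pi {\rm i} rn\cdot}\rangle\big|^2=\tfrac1r\int_0^{1/r}\mathbf{F}^*(x)\,\mathbf{G}^*(x)\mathbf{G}(x)\,\mathbf{F}(x)\,dx$ and Rayleigh-quotient bounds on $\mathbf{G}^*(x)\mathbf{G}(x)$. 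The technical points you flag (measurable selection of kernel/dependence vectors for (a) and (d), and of near-extremal eigenvectors to certify optimality of $\alpha_{\mathbf{G}}/r$ and $\beta_{\mathbf{G}}/r$) are handled the same standard way in those references, so there is no gap.
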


\medskip

\noindent $\bullet$ For the second one, consider the $s\ell \times N$ matrix of cross-covariances
\begin{equation}
\label{matrixcc}
\mathbf{R}_{\mathbf{b},\mathbf{h_2}}:= \begin{pmatrix}       
\mathbf{R_{b,h_{12}}} \\
\mathbf{R_{b,h_{22}}} \\  
\vdots \\
\mathbf{R_{b,h_{s'2}}}   \\
\end{pmatrix},
\end{equation}
where each $\ell \times N$ block $\mathbf{R_{b,h_{j'2}}}$, $j'=1,2,\dots,s'$, is given by
\[
{\scriptsize
\mathbf{R_{b,h_{j'2}}}=\begin{pmatrix}
R_{b,h_{j'2}}(0) & R_{b,h_{j'2}}(1) & \hdots & R_{b,h_{j'2}}(N-1) \\
      R_{b,h_{j'2}}(N-\bar{r}) & R_{b,h_{j'2}}(N-\bar{r}+1)& \hdots & R_{b,h_{j'2}}(2N-\bar{r}-1) \\
           \vdots & \vdots& \ddots & \vdots \\
                R_{b,h_{j'2}}(N-\bar{r}(\ell-1)) & R_{b,h_{j'2}}(N-\bar{r}(\ell-1)+1)& \hdots & R_{b,h_{j'2}}(N-\bar{r}(\ell-1)+N-1) \\
\end{pmatrix}.}
\]
In Ref. \cite{garcia:15} it was proved that:
\begin{lema}
\label{l3}
The sequence $\big\{ \mathbb{G}_{j',m}\big\}_{\substack{m=0,1,\ldots,\ell-1 \\ j'=1,2,\ldots,s'}}$ is a frame for 
$\ell^2_N(\ZZ)$ (or equivalently, a spanning set since we are in finite dimension) for $\ell^2_N(\ZZ)$ if and only $\text{rank\ $\mathbf{R}_{\mathbf{b},\mathbf{h_2}}$}=N$.
\end{lema}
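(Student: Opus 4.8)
The plan is to exploit that in the finite-dimensional space $\ell^2_N(\ZZ)\cong\CC^N$ the frame property collapses to completeness, and then to identify the vectors $\mathbb{G}_{j',m}$ with the (complex conjugated) rows of $\mathbf{R}_{\mathbf{b},\mathbf{h_2}}$, reducing the whole statement to a rank computation.

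First I would recall the standard fact that in a finite-dimensional Hilbert space any finite family is automatically a Bessel sequence, so the upper frame inequality is free; the lower inequality $A\|\mathbf{x}\|^2\le\sum_{j',m}|\langle \mathbf{x},\mathbb{G}_{j',m}\rangle|^2$ holds for some $A>0$ exactly when no nonzero $\mathbf{x}$ is orthogonal to all the $\mathbb{G}_{j',m}$, i.e.\ exactly when $\{\mathbb{G}_{j',m}\}$ spans $\ell^2_N(\ZZ)$ (the positivity of $A$ in that case following from compactness of the unit sphere). This justifies the parenthetical equivalence \emph{frame} $\iff$ \emph{spanning set} asserted in the statement and turns the problem into a question about ranks.

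Next I would compute the coordinates of each $\mathbb{G}_{j',m}$. Using unitarity of $V$, $\overline{\langle b,V^{\bar{r}m-p}h_{j'2}\rangle_{\Hc_2}}=\overline{R_{b,h_{j'2}}(p-\bar{r}m)}=\overline{R_{b,h_{j'2}}(N+p-\bar{r}m)}$ by $N$-periodicity, so the $p$-th coordinate of $\mathbb{G}_{j',m}\in\CC^N$ is precisely the complex conjugate of the $(m,p)$ entry of the block $\mathbf{R}_{b,h_{j'2}}$. Hence, as $(j',m)$ runs over $j'=1,\dots,s'$ and $m=0,\dots,\ell-1$, the vectors $\mathbb{G}_{j',m}$ are exactly the conjugates of the rows of the stacked matrix $\mathbf{R}_{\mathbf{b},\mathbf{h_2}}$ in \eqref{matrixcc}. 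Verifying that the shift by $\bar{r}m$ combined with $N$-periodicity reproduces row by row the entries written in \eqref{matrixcc}, and matching the $(j',m)$ indexing of the vectors with the block/row ordering of the matrix, is the only delicate point; this indexing bookkeeping is the main (if modest) obstacle.

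Finally I would use that a family of vectors spans $\CC^N$ if and only if the matrix having those vectors as rows has rank $N$, together with $\operatorname{rank}(\overline{M})=\operatorname{rank}(M)$ for any matrix $M$. Chaining the equivalences, $\{\mathbb{G}_{j',m}\}$ is a frame for $\ell^2_N(\ZZ)$ $\iff$ it spans $\ell^2_N(\ZZ)$ $\iff$ the row matrix has rank $N$ $\iff$ $\operatorname{rank}\mathbf{R}_{\mathbf{b},\mathbf{h_2}}=N$, which is the assertion of the lemma.
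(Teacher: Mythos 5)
Your proof is correct and takes essentially the approach the paper intends: the paper gives no argument of its own for this lemma, deferring to \cite{garcia:15}, where the result is established in just this way --- identifying each $\mathbb{G}_{j',m}$ with the conjugate of the row $(j',m)$ of $\mathbf{R}_{\mathbf{b},\mathbf{h_2}}$ and using that in finite dimension a family is a frame exactly when it spans. Your index bookkeeping also checks out: the $p$-th coordinate $\overline{R_{b,h_{j'2}}(N+p-\bar{r}m)}$ of $\mathbb{G}_{j',m}$ coincides, by $N$-periodicity of the cross-covariance, with the conjugate of the $(m,p)$ entry $R_{b,h_{j'2}}(N-\bar{r}m+p)$ of the block $\mathbf{R_{b,h_{j'2}}}$ in \eqref{matrixcc}.
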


Notice that, necessarily,  $s\geq r$ and $s'\geq \bar{r}$ and, consequently, the number $S=ss'$ of needed $UV$-systems $\Lc_{jj'}$ must be $S\geq r\bar{r}$.

\subsection{The sampling result}
\label{subsection1}
%%%%%%%%%%%%%%%%%%%%%%%
In case $\Big\{\overline{g_j(x)}\,{\rm e}^{2\pi i rnx} \otimes \mathbb{G}_{j',m} \Big\}_{\substack{n\in \ZZ;\,m=0,1, \dots, \ell-1 \\ j=1,2,\ldots,s;\,j'=1,2,\ldots,s'}}$ is a frame for $L^2(0,1)\otimes \ell^2_N(\ZZ)$ we need to describe the family of appropriate (for sampling purposes) dual frames. In Refs. \cite{garcia:06,garcia:15} suitable dual frames of the frames $\big\{ \overline{g_j(x)}\,{\rm e}^{2\pi {\rm i} rnx} \big\}_{n\in \mathbb{Z};\,  j=1,2,\dots, s}$ for $L^2(0, 1)$ and $\big\{ \mathbb{G}_{j',m}\big\}_{\substack{m=0,1,\ldots,\ell-1 \\ j'=1,2,\ldots,s'}}$ for $\ell^2_N(\ZZ)$ respectively, have been obtained. For a notational and  reading ease we recall them:

\medskip

\noindent {\scriptsize $\bullet$} For the first case, choose functions $h_j$ in $L^\infty(0, 1)$, $j=1,2,\dots,s$, such that
\begin{equation}
\label{h}
\big[h_1(x), h_2(x), \dots, h_s(x)\big] \mathbf{G}(x)=[1, 0, \dots, 0] \quad \text{a.e. in $(0, 1)$}\,.
\end{equation}
It was proven in \cite{garcia:06} that the sequence $\{r h_{j}(x)\,{\rm e}^{2\pi {\rm i} rnx}\}_{n\in\mathbb{Z};\,j=1,2,\ldots,s}$ is a dual frame of the frame 
$\{\overline{g_{j}(x)}\,{\rm e}^{2\pi {\rm i} rnx}\}_{n\in\mathbb{Z};\,j=1,2,\ldots,s}$ in $L^2(0, 1)$.

\medskip

All the possible choices in \eqref{h} are given by the first row of the $r\times s$ matrices given by
\begin{equation*}
\label{many}
\mathbf{H}_{\mathbf{U}}(x):=\mathbf{G}^\dag(x)+\mathbf{U}(x)\big[\mathbf{I}_s-\mathbf{G}(x)\mathbf{G}^\dag(x)\big]\,,
\end{equation*}
where $\mathbf{G}^\dag(x)$ denotes the Moore-Penrose pseudo-inverse  of $\mathbf{G}(x)$ given by 
\[
\mathbf{G}^\dag(x)=\big[\mathbf{G}^*(x)\,\mathbf{G}(x)\big]^{-1}\,\mathbf{G}^*(x)\,,
\]
$\mathbf{U}(x)$ is any $r\times s$ matrix with entries in $L^\infty(0, 1)$, and  $\mathbf{I}_s$ is the identity matrix of order $s$ (see \cite{penrose:55}). Notice that the entries of $\mathbf{G}^\dag(x)$  are essentially bounded in $(0, 1)$ since the functions $g_j$, $j=1,2,\dots, s$, 
and $\det^{-1}\big[\mathbf{G}^* (x)\, \mathbf{G}(x)\big]$ are essentially bounded in $(0, 1)$.

\medskip

\noindent {\scriptsize $\bullet$} For the second case, the $N$-periodic extensions of the columns $\big\{ \mathbb{H}_{j',m}\big\}_{\substack{m=0,1,\ldots,\ell-1 \\ j'=1,2,\ldots,s'}}$ of a left-inverse $\mathbf{H}_{\mathbf{S}}$ of the $s'\ell \times N$ matrix $\mathbf{R}_{\mathbf{b},\mathbf{h_2}}$ written as
\begin{equation*}
\label{columnsH}
\scriptsize{
\mathbf{H}_{\mathbf{S}}=\begin{pmatrix}
\vdots & \vdots & \vdots & \vdots & \vdots & \vdots & \vdots &\vdots &\vdots &\vdots \\
\mathbb{H}_{1,0} & \hdots & \mathbb{H}_{1,\ell-1} & \mathbb{H}_{2,0} & \hdots & \mathbb{H}_{2,\ell-1} & \hdots & \mathbb{H}_{s',0}&\hdots &\mathbb{H}_{s',\ell-1}\\
\vdots & \vdots & \vdots & \vdots & \vdots & \vdots & \vdots &\vdots &\vdots&\vdots\\
\end{pmatrix}Ê},
\end{equation*}
and constructed from a $\bar{r}\times s'\ell$ matrix $\mathbf{S}$ such that 
\begin{equation}
\label{s}
\mathbf{S}\,\mathbf{R}_{\mathbf{b},\mathbf{h_2}}=\big(\mathbf{I}_{\bar{r}}, \mathbf{O}_{\bar{r}\times (N-\bar{r})}\big)
\end{equation}
as in \cite[Section 3.1]{garcia:15}, form an appropriate dual frame of $\big\{ \mathbb{G}_{j',m}\big\}_{\substack{m=0,1,\ldots,\ell-1 \\ j'=1,2,\ldots,s'}}$ in 
$\ell^2_N(\ZZ)$ (see \cite{garcia:15} for the details about the construction of $\mathbf{H}_{\mathbf{S}}$). 

All the possible $\bar{r}\times s'\ell$ matrices $\mathbf{S}$ satisfying \eqref{s} are given by the first $\bar{r}$ rows of any any left-inverse $\mathbf{H}$ of the matrix $\mathbf{R}_{\mathbf{b},\mathbf{h_2}}$. All these left-inverses can be expressed as 
\[
\mathbf{H}=\mathbf{R}_{\mathbf{b},\mathbf{h_2}}^\dag+\mathbf{U}\big[\mathbf{I}_{s'\ell}-\mathbf{R}_{\mathbf{b},\mathbf{h_2}}\,\mathbf{R}_{\mathbf{b},\mathbf{h_2}}^\dag\big]\,,
\]
where $\mathbf{R}_{\mathbf{b},\mathbf{h_2}}^\dag$ denotes the Moore-Penrose pseudo-inverse of $\mathbf{R}_{\mathbf{b},\mathbf{h_2}}$, 
and $\mathbf{U}$ is any arbitrary $N\times s'\ell$ matrix.

\smallskip

Finally, one deduce that the sequence $\Big\{rh_j(x)\,{\rm e}^{2\pi i rnx} \otimes \mathbb{H}_{j',m} \Big\}_{\substack{n\in \ZZ;\,m=0,1, \dots, \ell-1 \\ j=1,2,\ldots,s;\,j'=1,2,\ldots,s'}}$ is a dual frame of $\Big\{\overline{g_j(x)}\,{\rm e}^{2\pi i rnx} \otimes \mathbb{G}_{j',m} \Big\}_{\substack{n\in \ZZ;\,m=0,1, \dots, \ell-1 \\ j=1,2,\ldots,s;\,j'=1,2,\ldots,s'}}$ in $L^2(0,1)\otimes \ell^2_N(\ZZ)$. Indeed:

\begin{lema}
Assume that $\{x_n^1\}$, $\{y_n^1\}$ are dual frames in $\Hc_1$ and $\{x_m^2\}$, $\{y_m^2\}$ are dual frames in $\Hc_2$. Then, 
$\{x_n^1\otimes x_m^2\}$ and $\{y_n^1\otimes y_m^2\}$ form a pair of dual frames in the tensor product $\Hc_1\otimes \Hc_2$.
\end{lema}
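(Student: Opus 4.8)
The plan is to verify the reconstruction identity \eqref{dual} directly for the pair $\{x_n^1\otimes x_m^2\}$, $\{y_n^1\otimes y_m^2\}$; the frame property of each family comes for free, since by the recalled fact that the tensor product of two frames is a frame, both families are frames for $\Hc_1\otimes\Hc_2$, hence in particular Bessel sequences. The first step I would take is to introduce the bounded operator
\[
T z:=\sum_{n,m}\big\langle z,\, y_n^1\otimes y_m^2\big\rangle\, x_n^1\otimes x_m^2\,,\qquad z\in\Hc_1\otimes\Hc_2\,,
\]
which is well defined and bounded precisely because $\{y_n^1\otimes y_m^2\}$ is Bessel (its analysis operator produces square-summable coefficients) and $\{x_n^1\otimes x_m^2\}$ is Bessel (its synthesis operator is bounded). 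The goal then becomes to prove $T=\mathrm{Id}_{\Hc_1\otimes\Hc_2}$.

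Since $T$ is bounded and, as recalled above, any bounded operator on $\Hc_1\otimes\Hc_2$ is determined by its values on $\Dc=\{u\otimes v\}$, it suffices to check $T(u\otimes v)=u\otimes v$ for every $u\in\Hc_1$ and $v\in\Hc_2$. Fixing such an elementary tensor and using the factorization $\langle u\otimes v,\, y_n^1\otimes y_m^2\rangle=\langle u,y_n^1\rangle\,\langle v,y_m^2\rangle$, I would reduce the computation to
\[
T(u\otimes v)=\sum_{n,m}\langle u,y_n^1\rangle\,\langle v,y_m^2\rangle\; x_n^1\otimes x_m^2\,,
\]
and then identify this double series with the tensor product of the two single series, namely $\big(\sum_{n}\langle u,y_n^1\rangle\,x_n^1\big)\otimes\big(\sum_{m}\langle v,y_m^2\rangle\,x_m^2\big)$. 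Once this identification is in hand, the dual-frame identity \eqref{dual} in $\Hc_1$ and in $\Hc_2$ gives $\sum_n\langle u,y_n^1\rangle x_n^1=u$ and $\sum_m\langle v,y_m^2\rangle x_m^2=v$, so that $T(u\otimes v)=u\otimes v$, as desired.

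The main obstacle is exactly this factorization of the double reconstruction series into a tensor product of two series, which is a convergence issue rather than a formal one. I would handle it through the continuity of the bilinear map $(w_1,w_2)\mapsto w_1\otimes w_2$, which holds because $\|w_1\otimes w_2\|=\|w_1\|\,\|w_2\|$: writing the rectangular partial sums $P_N:=\sum_{n\le N}\langle u,y_n^1\rangle x_n^1$ and $Q_M:=\sum_{m\le M}\langle v,y_m^2\rangle x_m^2$, one has the exact identity $P_N\otimes Q_M=\sum_{n\le N,\,m\le M}\langle u,y_n^1\rangle\langle v,y_m^2\rangle\,x_n^1\otimes x_m^2$; letting $N,M\to\infty$, continuity forces the left-hand side to converge to the tensor product of the two limits, while the right-hand side converges to $T(u\otimes v)$ because $\{x_n^1\otimes x_m^2\}$ is a frame and its expansion converges unconditionally, which is what legitimizes passing to rectangular partial sums. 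Finally, exchanging the roles of $\{x_\bullet\}$ and $\{y_\bullet\}$ yields by the identical computation the companion identity $z=\sum_{n,m}\langle z,x_n^1\otimes x_m^2\rangle\,y_n^1\otimes y_m^2$, so both equalities in \eqref{dual} hold and the two families form a pair of dual frames.
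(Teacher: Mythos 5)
Your proof is correct, and it takes a genuinely different route from the paper's. The paper argues at the operator level with diagonal truncations: it introduces $T_N^k h_k=\sum_{n=-N}^{N}\langle h_k, y_n^k\rangle x_n^k$ on each factor, notes these are uniformly bounded and converge strongly to $I_{\Hc_k}$, and then invokes Lemma 2.3 of \cite{bourou:08} to conclude that $T_N^1\otimes T_N^2\to I_{\Hc_1\otimes\Hc_2}$ in the strong operator topology; identifying $T_N^1\otimes T_N^2$ on elementary tensors (hence everywhere) with the truncated double-sum operator $G_N$ then yields the reconstruction identity. You instead build the limit operator $T$ directly, bounded because it factors as the synthesis operator of the Bessel family $\{x_n^1\otimes x_m^2\}$ composed with the analysis operator of the Bessel family $\{y_n^1\otimes y_m^2\}$, reduce to elementary tensors by boundedness and density of $\espan\Dc$, and handle the convergence issue by hand: the exact finite identity $P_N\otimes Q_M=\sum_{n\le N,\,m\le M}\langle u,y_n^1\rangle\langle v,y_m^2\rangle\,x_n^1\otimes x_m^2$, joint continuity of $(w_1,w_2)\mapsto w_1\otimes w_2$ on bounded sets, and unconditional convergence of the synthesis of $\ell^2$ coefficients over a Bessel sequence, which legitimizes the rectangular exhaustion. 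What your route buys is self-containedness --- no appeal to an external lemma on tensor products of strongly convergent operator sequences, only standard frame facts the paper already recalls --- while the paper's route buys brevity by absorbing all convergence questions into the cited lemma; note also that your rectangular partial sums of $T$ are precisely the paper's operators $G_N$ evaluated at $u\otimes v$, so the two arguments meet at the same finite-rank approximants. Two minor remarks: for the unconditional convergence step the Bessel property alone suffices, so your parenthetical appeal to the frame property of $\{x_n^1\otimes x_m^2\}$ is stronger than needed; and with index set $\ZZ$ your partial sums should run over $|n|\le N$, $|m|\le M$, a purely notational point that does not affect the argument.
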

\begin{proof}Let $T_N^k$  and $G_N$,  $k=1,2$ and $N \in \mathbb{N}$, be  the bounded operators defined by 
$$
\begin{array}[c]{ccll}
T_N^k: & \Hc_k & \longrightarrow & \Hc_k\\
        & h_k & \longmapsto &  {\displaystyle \sum_{n=-N}^N \langle h_k, y_n^k \rangle \, x_n^k}\,,
\end{array}$$ 
and
$$      
        \begin{array}[c]{ccll}
G_N: & \Hc_1\otimes \Hc_2 & \longrightarrow & \Hc_1\otimes \Hc_2\\
        & h & \longmapsto &   {\displaystyle\sum_{m,n=-N}^N \langle h, y_n^1\otimes y_m^2 \rangle \, x_n^1\otimes y_m^2}\,.
\end{array}
$$
The fact that $\{x_n^k\}$, $\{y_n^k\}$ are dual frames in $\Hc_k$ implies that each sequence $\{T_N^k\}_{N\in \mathbb{N}}
$, $k=1,2$, is a bounded sequence (in fact, $\|T_N^k\|^2 \leq B_x^k B_y^k$ for every $N$ where $B_x^k$ and  $B_y^k$ are 
the upper frame bounds of $\{x_n^k\}$ and $\{y_n^k\}$, respectively) which converges in the strong operator topology of $
\Bc (\Hc_k)$ to the identity operator $I_{\Hc_k}$. Thus,  Lemma 2.3 in \cite{bourou:08} yields that 
$\{T_N^1\otimes T_N^2\}_{N\in \mathbb{N}}$ converges  in the strong operator topology of 
$\Bc (\Hc_1\otimes \Hc_2)$ to $I_{\Hc_1}\otimes I_{\Hc_2}= I_{\Hc_1\otimes \Hc_2}$. Since  
\begin{equation*}
\begin{split}
(T_N^1\otimes T_N^2)(h_1\otimes h_2) &= T_N^1(h_1) \otimes T_N^2(h_2)=
\Big(\sum_{n=-N}^N \langle h_1, y_n^1 \rangle \, x_n^1\Big) \otimes \Big( \sum_{m=-N}^N \langle h_2, y_m^2 \rangle \, x_m^2\Big) \\
&=  \sum_{m, n=-N}^N \langle h_1, y_n^1\rangle \,\langle h_2, y_m^2 \rangle\, \big(x_n^1  \otimes x_m^2 \big)
= G_N (h_1\otimes h_2)
\end{split}
\end{equation*}
for every $N \in \mathbb{N}$ and every $h_k \in \Hc_k$, $k=1,2$, then, 
$(T_N^1\otimes T_N^2)(h) = G_N(h)$   for every $h\in   \Hc^1\otimes \Hc^2$
and every $N \in \mathbb{N}$.  Hence $\{G_N\}_{N\in \mathbb{N}}$ converges  in the strong operator topology of $
\Bc (\Hc_1\otimes \Hc_2)$ to $ I_{\Hc_1\otimes \Hc_2}$, i.e., 
$$ h= \sum_{m,n} \,\langle h, y_n^1\otimes y_m^2 \rangle \, x_n^1\otimes y_m^2\, $$
for every  $h \in \Hc_1\otimes \Hc_2$, which concludes the proof.
\end{proof}

As a consequence, for each $x\in \Ac_{a,b}$ there exists a unique $F\in L^2(0,1)\otimes \ell^2_N(\ZZ)$ such that $\Tc^{UV}_{a,b} F=x$. This $F$ can be expressed as the frame expansion
\[
F=\sum_{j=1}^s \sum_{j'=1}^{s'}\sum_{n\in \ZZ} \sum_{m=0}^{\ell-1}\Lc_{jj'}x(rn,\bar{r}m)\, \big(rh_j(x)\,{\rm e}^{2\pi i rnx} \otimes \mathbb{H}_{j',m} \big) \quad \text{ in } L^2(0,1)\otimes \ell^2_N(\ZZ)\,.
\]
Then, applying the isomorphism $\Tc^{UV}_{a,b}$ and the shifting property in Lemma \ref{shift} (here it is the point where we are using that the proposed  dual frames are convenient for sampling purposes) one gets
\[
\begin{split}
x&= \sum_{j=1}^s \sum_{j'=1}^{s'}\sum_{n\in \ZZ} \sum_{m=0}^{\ell-1} \Lc_{jj'}x(rn,\bar{r}m)\, \Tc^{UV}_{a,b}\big(rh_j(x)\,{\rm e}^{2\pi i rnx} \otimes \mathbb{H}_{j',m} \big) \\
&= \sum_{j=1}^s \sum_{j'=1}^{s'} \sum_{n\in \ZZ} \sum_{m=0}^{\ell-1} \Lc_{jj'}x(rn,\bar{r}m)\, U^{rn} \big(\Tc^U_a rh_j \big) \otimes V^{\bar{r}m} \big(\Tc^V_{b,N} \mathbb{H}_{j',0}\big)\\
&=\sum_{j=1}^s \sum_{j'=1}^{s'}\sum_{n\in \ZZ} \sum_{m=0}^{\ell-1} \Lc_{jj'}x(rn,\bar{r}m)\, \big(U^{rn} \otimes V^{\bar{r}m} \big)\, ( c_j\otimes d_{j'})\,,\\
\end{split}
\]
where $c_j=\Tc^U_a (rh_j) \in \Ac_a$, $j=1, 2, \dots, s$ and $d_{j'}=\Tc^V_{b,N} (\mathbb{H}_{j',0})\in \Ac_b$, $j'=1, 2, \dots, s'$.

\medskip

Collecting the pieces we have obtained until now we prove the following result:
\begin{teo}
\label{samp1}
Let $h_{jj'}:=h_{j1}\otimes h_{j'2} \in \Hc_1\otimes \Hc_2$, $j=1,2, \dots,s$, $j'=1,2, \dots,s'$, and let $\Lc_{jj'}$ be the associated $UV$-system  giving the samples of any $x\in\Ac_{a,b}$ as in \eqref{samples}, $j=1,2, \dots,s$, $j'=1,2, \dots,s'$. Assume that the function $g_j$, $j=1,2,\dots,s$, given in \eqref{gj} belongs to $L^\infty(0, 1)$; or equivalently, that 
$\beta_{\mathbf{G}}<\infty$ for the associated $s\times r$ matrix $\mathbf{G}(x)$ defined in \eqref{Gmatrix}. The following statements are equivalent:
\begin{enumerate}[(a)]
\item $\alpha_{\mathbf{G}}>0$ and $\text{rank\ $\mathbf{R}_{\mathbf{b},\mathbf{h_2}}$}=N$.
\item There exist $S=ss'$ elements $c_j\otimes d_{j'}$ in the subspace $\mathcal{A}_{a,b}$, $j=1,2, \dots, s$, $j'=1,2, \dots, s'$, such that the sequence 
$\big\{\big(U^{rn} \otimes V^{\bar{r}m} \big)\, ( c_j\otimes d_{j'})\big\}_{\substack{n\in \ZZ;\,m=0,1, \dots, \ell-1 \\ j=1,2,\ldots,s;\,j'=1,2,\ldots,s'}}$ is a frame for $\mathcal{A}_{a,b}$, and for any 
$x\in \mathcal{A}_{a,b}$ the expansion
\begin{equation}
\label{framesampling1}
x=\sum_{j=1}^s \sum_{j'=1}^{s'}\sum_{n\in \ZZ} \sum_{m=0}^{\ell-1} \Lc_{jj'}x(rn,\bar{r}m)\, \big(U^{rn} \otimes V^{\bar{r}m} \big)\, ( c_j\otimes d_{j'}) \quad \text{in $\Hc_1\otimes \Hc_2$}\,,
\end{equation}
holds.
\item There exists a frame $\big\{C_{j,j',n,m}\big\}_{\substack{n\in \ZZ;\,m=0,1, \dots, \ell-1 \\ j=1,2,\ldots,s;\, j'=1,2,\ldots,s'}}$ for $\Ac_{a,b}$ such that, for each $x\in \Ac_{a,b}$ the expansion
\[
x=\sum_{j=1}^s \sum_{j'=1}^{s'} \sum_{n\in \ZZ} \sum_{m=0}^{\ell-1} \Lc_{jj'}x(rn,\bar{r}m)\, C_{j,j',n,m} \quad \text{in $\Hc_1\otimes \Hc_2$}\,,
\]
holds.
\end{enumerate}

\end{teo}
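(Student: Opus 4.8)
The plan is to establish the cycle (a) $\Rightarrow$ (b) $\Rightarrow$ (c) $\Rightarrow$ (a). The implication (b) $\Rightarrow$ (c) is immediate: setting $C_{j,j',n,m}:=\big(U^{rn}\otimes V^{\bar{r}m}\big)(c_j\otimes d_{j'})$ turns the frame and the expansion asserted in (b) into precisely those required in (c).

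For (a) $\Rightarrow$ (b) I would simply assemble the ingredients prepared before the statement. Under the standing hypothesis $\beta_{\mathbf{G}}<\infty$ together with $\alpha_{\mathbf{G}}>0$, Lemma \ref{l2}(c) makes $\big\{\overline{g_j(x)}\,{\rm e}^{2\pi {\rm i} rnx}\big\}$ a frame for $L^2(0,1)$, and $\text{rank}\,\mathbf{R}_{\mathbf{b},\mathbf{h_2}}=N$ makes $\big\{\mathbb{G}_{j',m}\big\}$ a frame for $\ell^2_N(\ZZ)$ by Lemma \ref{l3}; hence, by the tensor-product frame property recalled in Section \ref{section1}, their tensor product is a frame for $L^2(0,1)\otimes\ell^2_N(\ZZ)$. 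By the expression \eqref{sampexpression} the samples of $x$ are the analysis coefficients of $F:=(\Tc^{UV}_{a,b})^{-1}x$ against this frame. Choosing the dual frames $\big\{rh_j(x)\,{\rm e}^{2\pi {\rm i} rnx}\big\}$ and $\big\{\mathbb{H}_{j',m}\big\}$ built from \eqref{h} and \eqref{s}, the dual-frame tensor lemma gives a dual frame of the product, hence the frame expansion of $F$ displayed just above the theorem. Applying $\Tc^{UV}_{a,b}$ and the shifting property of Lemma \ref{shift} turns it into \eqref{framesampling1} with $c_j=\Tc^U_a(rh_j)$ and $d_{j'}=\Tc^V_{b,N}(\mathbb{H}_{j',0})$. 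Finally, since $\big(U^{rn}\otimes V^{\bar{r}m}\big)(c_j\otimes d_{j'})=\Tc^{UV}_{a,b}\big(rh_j(x)\,{\rm e}^{2\pi {\rm i} rnx}\otimes\mathbb{H}_{j',m}\big)$ and $\Tc^{UV}_{a,b}$ is boundedly invertible, this image sequence is a frame for $\Ac_{a,b}$, proving both claims of (b).

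The substantive direction is (c) $\Rightarrow$ (a). Applying $(\Tc^{UV}_{a,b})^{-1}$ to the expansion in (c) and rewriting the samples through \eqref{sampexpression} gives, for every $F\in L^2(0,1)\otimes\ell^2_N(\ZZ)$,
\[
F=\sum_{j,j',n,m}\big\langle F,\,\overline{g_j(x)}\,{\rm e}^{2\pi {\rm i} rnx}\otimes\mathbb{G}_{j',m}\big\rangle\,(\Tc^{UV}_{a,b})^{-1}C_{j,j',n,m}\,.
\]
Here the synthesis sequence $\big\{(\Tc^{UV}_{a,b})^{-1}C_{j,j',n,m}\big\}$ is a frame (the image of a frame under an isomorphism), while the analysis sequence $\big\{\overline{g_j(x)}\,{\rm e}^{2\pi {\rm i} rnx}\otimes\mathbb{G}_{j',m}\big\}$ is Bessel, because $g_j\in L^\infty(0,1)$ makes the $L^2(0,1)$-factor Bessel by Lemma \ref{l2}(b) and the finite-dimensional factor is trivially Bessel. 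This is exactly the hypothesis of the earlier Proposition (stable recovery by a frame expansion), whose nontrivial direction --- two Bessel sequences obeying such a reconstruction identity are a pair of dual frames --- forces the analysis sequence to be a frame for $L^2(0,1)\otimes\ell^2_N(\ZZ)$. The tensor-product frame characterization then makes each factor a frame, so Lemma \ref{l2}(c) yields $\alpha_{\mathbf{G}}>0$ and Lemma \ref{l3} yields $\text{rank}\,\mathbf{R}_{\mathbf{b},\mathbf{h_2}}=N$, which is (a).

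The main obstacle lies in this last direction (c) $\Rightarrow$ (a): one must argue that the mere existence of a reconstructing frame forces the analysis sequence to be a frame rather than merely complete. The point is that the a priori Bessel bound (guaranteed by $g_j\in L^\infty$) supplies the upper frame bound, while the reconstruction identity, transported to the model space $L^2(0,1)\otimes\ell^2_N(\ZZ)$ by the isomorphism $\Tc^{UV}_{a,b}$, supplies the lower one; without the essential boundedness of the $g_j$ no lower bound could be extracted.
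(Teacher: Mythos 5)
Your proposal is correct and follows essentially the same route as the paper: the implications (a)$\Rightarrow$(b)$\Rightarrow$(c) are exactly the pre-theorem construction (dual frames built from \eqref{h} and \eqref{s}, the tensor dual-frame lemma, and the shifting property of Lemma \ref{shift}), with (b)$\Rightarrow$(c) trivial. Your (c)$\Rightarrow$(a) is also the paper's own argument: transport the expansion to $L^2(0,1)\otimes\ell^2_N(\ZZ)$ via $(\Tc^{UV}_{a,b})^{-1}$, rewrite the samples through \eqref{sampexpression}, use the Bessel property guaranteed by $g_j\in L^\infty(0,1)$, and conclude via the standard result that a Bessel sequence occurring as the analysis side of such a reconstruction identity is a frame (the paper cites \cite[Lemma 5.6.2]{ole:03} for precisely this step), after which Lemmas \ref{l2} and \ref{l3} yield (a).
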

\begin{proof}
It only remains to prove that condition $(c)$ implies condition $(a)$. Indeed, applying the isomorphism $\big(\Tc^{UV}_{a,b}\big)^{-1}$ to the expansion given in 
$(c)$ one gets the frame expansion
\[
\big(\Tc^{UV}_{a,b}\big)^{-1}(x)=\sum_{j=1}^s \sum_{j'=1}^{s'}\sum_{n\in \ZZ} \sum_{m=0}^{\ell-1} \Lc_{jj'}x(rn,\bar{r}m)\, \big(\Tc^{UV}_{a,b}\big)^{-1} \big(C_{j,j',n,m}\big) \quad \text{in $L^2(0,1)\otimes \ell^2_N(\ZZ)$}\,.
\]
Having in mind \eqref{sampexpression}, and that the sequence  $\Big\{\overline{g_j(x)}\,{\rm e}^{2\pi i rnx} \otimes \mathbb{G}_{j',m} \Big\}_{\substack{n\in \ZZ;\,m=0,1, \dots, \ell-1 \\ j=1,2,\ldots,s;\, j'=1,2,\ldots,s'}}$ is a Bessel sequence  for $L^2(0,1)\otimes \ell^2_N(\ZZ)$, according to \cite[Lemma 5.6.2]{ole:03}, one gets that it is a frame for $L^2(0,1)\otimes \ell^2_N(\ZZ)$. In particular, this implies, via Lemmas \ref{l2} and \ref{l3}, that  $\alpha_{\mathbf{G}}>0$ and 
$\text{rank\ $\mathbf{R}_{\mathbf{b},\mathbf{h_2}}$}=N$.
\end{proof}

In case $s=r$ and $s'=\bar{r}$ we are in the Riesz bases setting: see statement $(d)$ in Lemma \ref{caracterizacion}; moreover, the square matrix $\mathbf{R}_{\mathbf{b},\mathbf{h_2}}$ must be invertible (see \cite[Corollary 4]{garcia:15}). In fact, the following corollary holds:
\begin{cor}
In addition to the hypotheses  of theorem above, assume that $s=r$ and $s'=\bar{r}$. The following statements are equivalent:
\begin{enumerate}
\item $\alpha_{\mathbf{G}}>0$ and the square matrix $\mathbf{R}_{\mathbf{b},\mathbf{h_2}}$ is invertible.
\item There exist $r\bar{r}$ unique elements $c_j\otimes d_{j'}$, $j=1,2, \dots, r$, $j'=1,2, \dots, \bar{r}$, in the subspace $\mathcal{A}_{a,b}$ such that the sequence 
$\Big\{\big(U^{rn} \otimes V^{\bar{r}m} \big)\, ( c_j\otimes d_{j'})\Big\}_{\substack{n\in \ZZ;\,m=0,1, \dots, \ell-1 \\ j=1,2,\ldots,r;\,j'=1,2,\ldots,\bar{r}}}$ is a Riesz basis for $\mathcal{A}_{a,b}$, and the expansion of any $x\in \mathcal{A}_{a,b}$ with respect to this basis is
\begin{equation*}
x=\sum_{j=1}^r \sum_{j'=1}^{\bar{r}}\sum_{n\in \ZZ} \sum_{m=0}^{\ell-1} \Lc_{jj'}x(rn,\bar{r}m)\, \big(U^{rn} \otimes V^{\bar{r}m} \big)\, ( c_j\otimes d_{j'}) \quad \text{in $\Hc_1\otimes \Hc_2$}\,.
\end{equation*}
\end{enumerate}
In case the equivalent conditions are satisfied, the vectors $c_j\otimes d_{j'}$, $j=1, 2, \dots, r$, $j'=1, 2, \dots, \bar{r}$, satisfy the interpolation property 
\begin{equation}
\label{interpolation}
\Lc_{kk'} \big(c_j\otimes d_{j'}\big) (rn,\bar{r}m)=\delta_{j,k}\,\delta_{j',k'}\,\delta_{n,0}\,\delta_{m,0}\,,
\end{equation}
whenever $n\in \ZZ$, $m=0, 1, \dots, \ell-1$, $j,k=1, 2, \dots, r$ and $j',k'=1, 2, \dots, \bar{r}$.
\end{cor}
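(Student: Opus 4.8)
The plan is to specialize the machinery already assembled for Theorem \ref{samp1} to the square case $s=r$, $s'=\bar r$, where in each tensor factor the word ``frame'' upgrades to ``Riesz basis'' and the dual becomes unique. Throughout I would keep working through the isomorphism $\Tc^{UV}_{a,b}$ and the Section~\ref{section1} fact that a tensor product of two sequences is a Riesz basis if and only if each factor is, so that the two scalar problems in $L^2(0,1)$ and $\ell^2_N(\ZZ)$ decouple cleanly.

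For the implication $(1)\Rightarrow(2)$, I would first note that under the standing hypothesis $\beta_{\mathbf{G}}<\infty$ together with $\alpha_{\mathbf{G}}>0$ and $s=r$, part $(d)$ of Lemma~\ref{caracterizacion} makes $\{\overline{g_j(x)}\,{\rm e}^{2\pi i rnx}\}$ a Riesz basis for $L^2(0,1)$. Next, since $s'=\bar r$ the matrix $\mathbf{R}_{\mathbf{b},\mathbf{h_2}}$ is $N\times N$, so its invertibility is exactly $\operatorname{rank}\mathbf{R}_{\mathbf{b},\mathbf{h_2}}=N$; by Lemma~\ref{l3} this makes $\{\mathbb{G}_{j',m}\}$ a frame for $\ell^2_N(\ZZ)$, and since it consists of exactly $s'\ell=\bar r\,\ell=N$ vectors in an $N$-dimensional space it is in fact a basis, hence a Riesz basis. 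Taking tensor products gives a Riesz basis of $L^2(0,1)\otimes\ell^2_N(\ZZ)$, and applying the isomorphism $\Tc^{UV}_{a,b}$ (which preserves Riesz bases) together with the shifting property of Lemma~\ref{shift} transports it to the sequence $\{(U^{rn}\otimes V^{\bar r m})(c_j\otimes d_{j'})\}$, a Riesz basis for $\Ac_{a,b}$, the expansion \eqref{framesampling1} now being its unique Riesz-basis expansion. Uniqueness of the vectors $c_j\otimes d_{j'}$ comes from the collapse of the two Moore--Penrose constructions to genuine inverses in the square case: $\mathbf{G}^\dag=\mathbf{G}^{-1}$ forces $\mathbf{I}_s-\mathbf{G}(x)\mathbf{G}^\dag(x)=\mathbf{O}$ and likewise for $\mathbf{R}_{\mathbf{b},\mathbf{h_2}}$, so the free parameter matrices $\mathbf{U}(x)$ and $\mathbf{U}$ disappear and each factor dual frame, hence $c_j=\Tc^U_a(rh_j)$ and $d_{j'}=\Tc^V_{b,N}(\mathbb{H}_{j',0})$, is pinned down as the biorthogonal dual. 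For the converse $(2)\Rightarrow(1)$, I would simply observe that a Riesz basis is in particular a frame, so condition $(b)$ of Theorem~\ref{samp1} holds; that theorem yields $\alpha_{\mathbf{G}}>0$ and $\operatorname{rank}\mathbf{R}_{\mathbf{b},\mathbf{h_2}}=N$, and since $\mathbf{R}_{\mathbf{b},\mathbf{h_2}}$ is square this last condition is precisely its invertibility.

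Finally, the interpolation property \eqref{interpolation} is nothing but the biorthogonality of the two dual Riesz bases, read off through \eqref{sampexpression}. Taking $x=c_j\otimes d_{j'}$, whose preimage under $\Tc^{UV}_{a,b}$ is $F=rh_j\otimes\mathbb{H}_{j',0}$, the sample expression \eqref{sampexpression} factorizes over the tensor product as
\[
\Lc_{kk'}\big(c_j\otimes d_{j'}\big)(rn,\bar r m)=\big\langle rh_j,\,\overline{g_k}\,{\rm e}^{2\pi i rnx}\big\rangle_{L^2(0,1)}\,\big\langle\mathbb{H}_{j',0},\,\mathbb{G}_{k',m}\big\rangle_{\ell^2_N(\ZZ)},
\]
and biorthogonality of each dual Riesz-basis pair gives the factors $\delta_{j,k}\delta_{n,0}$ and $\delta_{j',k'}\delta_{m,0}$, whose product is the asserted expression.

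The main obstacle I anticipate is the uniqueness assertion in $(2)$: one must argue carefully that the ambiguity present in the overcomplete (frame) case---encoded by the arbitrary matrices $\mathbf{U}(x)$ and $\mathbf{U}$---genuinely vanishes in the square case, equivalently that a Riesz basis possesses a single (biorthogonal) dual, so that the reconstruction vectors $c_j\otimes d_{j'}$ are uniquely determined. Once that is in place, everything else is bookkeeping through the isomorphism $\Tc^{UV}_{a,b}$, the tensor-product characterizations of Section~\ref{section1}, and the shifting property of Lemma~\ref{shift}.
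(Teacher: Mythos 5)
Your proposal is correct and takes essentially the same route as the paper: the equivalence is exactly the specialization of Theorem~\ref{samp1} to the square case, using Lemma~\ref{caracterizacion}$(d)$ in the first factor and the equivalence of $\operatorname{rank}\mathbf{R}_{\mathbf{b},\mathbf{h_2}}=N$ with invertibility of the now-square matrix in the second, with uniqueness of the $c_j\otimes d_{j'}$ correctly traced to the uniqueness of the dual of a Riesz basis (equivalently, the vanishing of the Moore--Penrose free parameters $\mathbf{U}(x)$ and $\mathbf{U}$). The only cosmetic difference is the interpolation property, which the paper obtains in one line from uniqueness of the coefficients in the Riesz-basis expansion of $x=c_j\otimes d_{j'}$, while you compute it through \eqref{sampexpression} and biorthogonality of the dual pairs in $L^2(0,1)\otimes \ell^2_N(\ZZ)$ --- two equivalent formulations of the same fact (and note that in your transport step it is the image of the \emph{dual} basis $\big\{rh_j(x)\,{\rm e}^{2\pi i rnx}\otimes\mathbb{H}_{j',m}\big\}$, not of $\big\{\overline{g_j(x)}\,{\rm e}^{2\pi i rnx}\otimes\mathbb{G}_{j',m}\big\}$, that yields $\big\{(U^{rn}\otimes V^{\bar{r}m})(c_j\otimes d_{j'})\big\}$, as your later identifications $c_j=\Tc^U_a(rh_j)$, $d_{j'}=\Tc^V_{b,N}(\mathbb{H}_{j',0})$ make clear).
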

\begin{proof}
The uniqueness of the expansion with respect to a Riesz basis gives the stated interpolation property \eqref{interpolation}.
\end{proof}

\subsection*{A representative example}
%%%%%%%%%%%%%%%%%%%%%%%%
In $\Hc_1:=L^2(\RR)$ consider the shift operator $U: f(x) \mapsto f(x-1)$. Let  $\varphi \in L^2(\RR)$ be a function such that the sequence $\big\{\varphi(x-n)\big\}_{n\in \ZZ}$ is a Riesz sequence in $L^2(\RR)$ (for instance, the function $\varphi$ may be a $B$-spline). Consider also the Hilbert space $\Hc_2:=L^2(0, 1)$ of $1$-periodic functions and, for a fixed $N\in \NN$, the unitary operator $V_N: g(y)\mapsto g(y-\frac{1}{N})$. Let $\psi \in L^2(0, 1)$ be a function such that  the set $\big\{\psi, V_N \psi, \dots , V_N^{N-1} \psi  \big\}$ is linearly independent in $L^2(0, 1)$; obviously $V_N^N \psi=\psi$ (for instance,  choose a nonzero function $\psi \in L^2(0, 1)$ taking the value $0$ outside the interval $(0, 1/N)$). 

\medskip

In the tensor product $L^2(\RR) \otimes L^2(0, 1)=L^2\big(\RR\times (0, 1)\big)$ consider the closed subspace $\Ac_{\varphi,\psi}:=\Ac_\varphi \otimes \Ac_\psi$, i.e., the tensor product of the shift-invariant subspaces $\Ac_\varphi$ and $\Ac_\psi$ in $L^2(\RR)$. Given the functions $h_{j1}\in L^2(\RR)$, $j=1, 2, \dots, s$, and $h_{j'2}\in L^2(0, 1)$, $j'=1, 2, \dots, s'$, and fixing the sampling periods $r\in \NN$ and $\bar{r} | N$ (recall that $\ell=N/\bar{r}$), for each $f$ in $\Ac_{\varphi,\psi}$ we consider its samples defined by
\[
\Lc_{jj'}f(rn,\bar{r}m):=\int_{-\infty}^\infty \int_0^1 f(x,y)\, \overline{h_{j1}(x-rn)}\,\overline{h_{j'2}(y-m/\ell)}\,dxdy\,, 
\]
where $n\in \ZZ$ and $m=0, 1, \dots \ell-1$ (note that $\bar{r}m/N=m/\ell$). 

\medskip

Under the hypotheses in Theorem \ref{samp1} there will exist functions $S_j\in \Ac_\varphi$, $j=1, 2, \dots, s$, and $\Theta_{j'} \in \Ac_\psi$, $j'=1, 2, \dots, s'$, such that for any $f\in \Ac_{\varphi,\psi}$ the sampling expansion \eqref{framesampling1} reads:
\[
f(x,y)=\sum_{j=1}^s \sum_{j'=1}^{s'}\sum_{n\in \ZZ} \sum_{m=0}^{\ell-1} \Lc_{jj'}f(rn,\bar{r}m)\,S_j(x-rn)\,\Theta_{j'}(y-m/\ell) \quad \text{in $L^2\big(\RR\times (0, 1)\big)$}\,.
\]

In this particular example, adding some mild hypotheses we can also derive pointwise convergence in the above sampling formula. Indeed, assuming that the generators $\varphi$, $\psi$ are continuous functions on $\RR$ such that $\sum_{n\in \ZZ}|\varphi (x-n)|^2$ is bounded on $[0, 1]$ and since 
$\sum_{m=0}^{N-1}|\psi (y-m/N)|^2$ is bounded on $[0, 1/N]$, it is easy to deduce that any function $f$ in $\Ac_{\varphi,\psi}$ is a continuous function  defined by the pointwise sum 
\begin{equation}
\label{psum}
f(x, y)=\sum_{n\in \ZZ}\sum_{m=0}^{N-1} a_{n,m}\,\varphi (x-n)\,\psi (y-m/N)\quad \text{in $\RR\times [0, 1)$}\,.
\end{equation}
Besides, the subspace $\Ac_{\varphi,\psi}$ is a reproducing kernel Hilbert space (RKHS) since the evaluation functionals are bounded in $\Ac_{\varphi,\psi}$. Namely, using 
Cauchy-Schwarz's inequality in \eqref{psum} and Riesz basis definition, for each $(x, y)\in \RR\times [0, 1)$ we have
\[
|f(x, y)|^2 \le \frac{\|f\|^2}{A}\,\sum_{n\in \ZZ}|\varphi (x-n)|^2\,\sum_{m=0}^{N-1}|\psi (y-m/N)|^2\,,\quad f\in \Ac_{\varphi,\psi}\,,
\]
where $A$ denotes the lower Riesz bound of the Riesz basis $\big\{\varphi (x-n)\,\psi (y-m/N)\big\}_{n,m}$ for $\Ac_{\varphi,\psi}$.  The above inequality  shows that convergence in $L^2\big(\RR\times (0, 1)\big)$ implies pointwise convergence which is uniform on $\RR\times [0, 1)$. See, for instance, Ref. \cite{garcia2:15} for the relationship between sampling theory and RKHS's.

%%%%%%%%%%%%%%%%%%%%%%%% 
\section{Infinite-infinite generators case}
\label{section3}
%%%%%%%%%%%%%%%%%%%%
Let $\Hc_1$, $\Hc_2$ be two separable Hilbert spaces, and $U:\Hc_1 \longrightarrow \Hc_1$, $V:\Hc_2 \longrightarrow \Hc_2$ two unitary operators. Consider two elements $a\in \Hc_1$ and $b\in \Hc_2$ such that the sequences $\{U^n a\}_{n\in \ZZ}$ and $\{V^m b\}_{m\in \ZZ}$ form a Riesz sequence in $\Hc_1$ and  in $\Hc_2$ respectively. In the tensor product Hilbert space $\Hc_1\otimes \Hc_2$ we consider its closed subspace
\[
\Ac_{a,b}:=\overline{{\rm span}}_{\Hc_1\otimes \Hc_2} \big\{U^na\otimes V^mb\big\}_{n,m\in \ZZ}\,.
\]
Since the sequence $\big\{U^na\otimes V^mb\big\}_{n,m\in \ZZ}$ is a Riesz basis for the tensor product 
$\Ac_a \otimes \Ac_b$ of the $U$-invariant subspace  $\Ac_a=\big\{\sum_{n\in \ZZ} a_n\, U^n a \,:\, \{a_n\} \in \ell^2(\ZZ)\big\}$ in $\Hc_1$ and the $V$-invariant subspace $\Ac_b=\big\{\sum_{m\in \ZZ} b_m\, V^m b \,:\, \{b_m\} \in \ell^2(\ZZ)\big\}$ in $\Hc_2$ we deduce that $\Ac_{a,b}=\Ac_a \otimes \Ac_b$
which can be described as
\[
\Ac_{a,b}=\Big\{\sum_{n,m\in \ZZ}  a_{nm}\, U^na\otimes V^mb \,:\, \{a_{nm}\} \in \ell^2(\ZZ^2) \Big\}\,.
\]
We will refer to the vectors $\{a, b\}$ as the infinite-infinite generators of the subspace $\Ac_{a,b}$ in $\Hc_1\otimes \Hc_2$.

\subsection*{The samples}
%%%%%%%%%%%%%%
Fixed $S=ss'$ elements $h_{jj'}:=h_{j1}\otimes h_{j'2} \in \Hc_1\otimes \Hc_2$, $j=1,2, \dots,s$, $j'=1,2, \dots,s'$, we consider two sampling periods $r, \bar{r}\in \NN$. For each $x\in \Ac_{a,b}$ we introduce  the sequence of its generalized samples
\[
\Big\{\Lc_{jj'}x(rn,\bar{r}m) \Big\}_{\substack{n\in \ZZ;\,j=1,2,\ldots,s \\ m\in \ZZ;\,j'=1,2,\ldots,s'}}
\]
defined by
\begin{equation}
\label{samples2}
\Lc_{jj'}x(rn,\bar{r}m):=\big\langle x, U^{rn}h_{j1}\otimes V^{\bar{r}m} h_{j'2}\big\rangle_{\Hc_1\otimes \Hc_2}\,,
\end{equation}
where $n, m\in \ZZ$, $j=1,2,\dots, s$ and $j'=1,2,\dots, s'$. %We will refer $\Lc_j$ as a $UV$-system acting on $\Ac_{a,b}$.

\subsection*{The isomorphism $\Tc^{UV}_{a,b}$}
%%%%%%%%%%%%%%%%%%%%%%%%%%%
In this case we introduce the isomorphism $\Tc^{UV}_{a,b}$ which maps the orthonormal basis $\big\{{\rm e}^{2\pi i nx}\otimes {\rm e}^{2\pi i mx}\big\}_{n, m\in \ZZ}$ for the Hilbert space $L^2(0,1)\otimes L^2(0,1)$ onto the Riesz basis $\big\{U^na\otimes V^mb \big\}_{n, m\in \ZZ}$ for $\Ac_{a,b}$. That is, $\Tc^{UV}_{a,b}=\Tc^U_a \otimes \Tc^V_b$, where $\Tc^U_a$ and $\Tc^V_b$ denote the isomorphisms
\[
\begin{array}[c]{ccll}
\Tc_a^U: & L^2(0, 1) & \longrightarrow & \Ac_a\\
        & {\rm e}^{2\pi i nx} & \longmapsto & U^n a  
\end{array} \quad \text{and} \quad       
        \begin{array}[c]{ccll}
\Tc^V_b: & L^2(0, 1) & \longrightarrow & \mathcal{A}_b\\
        & {\rm e}^{2\pi i mx} & \longmapsto & V^m b\,.
\end{array}
\]
Here, the shifting property reads:
\begin{equation}
\label{shift2}
\begin{split}
\Tc^{UV}_{a,b}\big( g(x){\rm e}^{2\pi i Mx}\otimes \widetilde{g}(x){\rm e}^{2\pi i Nx}\big)&=U^M\big(\Tc_a^U g \big)\otimes V^N\big( \Tc_b^V \widetilde{g}\big)\\
&=(U^M \otimes V^N)\, \Tc^{UV}_{a,b}(g\otimes \widetilde{g})\,,
\end{split}
\end{equation}
where $g, \widetilde{g} \in L^2 (0, 1)$ and $N, M\in \ZZ$. The proof of \eqref{shift2} goes in the same manner as in Lemma \ref{shift}.

\subsection*{An expression for the samples}
%%%%%%%%%%%%%%%%%%%%%%%
For  any $x=\sum_{k, p\in \ZZ}  a_{kp}\, U^ka\otimes V^pb$ in $\Ac_{a,b}$ we have
\[
\begin{split}
\Lc_{jj'}x(rn,\bar{r}m)&=\Big\langle \sum_{k, p\in \ZZ}  a_{kp}\, U^ka\otimes V^pb, U^{rn}h_{j1}\otimes V^{\bar{r}m}h_{j'2} \Big\rangle_{\Hc_1\otimes \Hc_2} \\
&=\sum_{k, p\in \ZZ}  a_{kp}\, \big\langle U^ka\otimes V^pb, U^{rn}h_{j1}\otimes V^{\bar{r}m}h_{j'2} \big\rangle_{\Hc_1\otimes \Hc_2}
\end{split}
\]
Now, consider $F:=\sum_{k, p\in \ZZ}  a_{kp}\, {\rm e}^{2\pi i kx}\otimes {\rm e}^{2\pi i px}$, the element in $L^2(0,1)\otimes L^2(0,1)$ such that 
$\Tc^{UV}_{a,b} F=x$. Thus, Plancherel identity for orthonormal bases gives
\begin{equation*}
\begin{split}
\Lc_{jj'}x(rn,\bar{r}m)&=\Big\langle F, \sum_{k, p\in \ZZ} \langle \overline{a, U^{rn-k} h_{j1}}\rangle_{\Hc_1}\, \langle \overline{b, V^{\bar{r}m-p} h_{j'2}}\rangle_{\Hc_2} \, {\rm e}^{2\pi i kx}\otimes {\rm e}^{2\pi i px}\Big\rangle_{L^2(0,1)\otimes L^2(0,1)} \\ 
=\Big\langle F&, \big(\sum_{k\in \ZZ}\langle \overline{a, U^{rn-k} h_{j1}}\rangle_{\Hc_1}\,{\rm e}^{2\pi i kx} \big)\otimes \big( \sum_{p\in \ZZ}\langle \overline{b, V^{\bar{r}m-p} h_{j'2}}\rangle_{\Hc_2}\,{\rm e}^{2\pi i px} \big)\Big\rangle_{L^2(0,1)\otimes L^2(0,1)} \\ 
\end{split}
\end{equation*}
That is, for $n, m\in \ZZ$, $j=1, 2, \dots, s$ and $j'=1, 2, \dots, s'$ we have got the following expression for the samples:
\begin{equation}
\label{sampexpression2}
\Lc_{jj'}x(rn,\bar{r}m)=\Big\langle F,  \overline{g_{j1}(x)}\,{\rm e}^{2\pi i rnx} \otimes \overline{g_{j'2}(x)}\,{\rm e}^{2\pi i \bar{r}mx} \Big\rangle_{L^2(0,1)\otimes L^2(0,1)}
\end{equation}
where, for $j=1, 2, \dots, s$ and $j'=1, 2, \dots, s'$, the functions 
\begin{equation}
\label{gj12}
g_{j1}(x):=\sum_{k\in \ZZ} \langle a, U^k h_{j1} \rangle_{\Hc_1}\,{\rm e}^{2\pi i kx} \quad \text{and}\quad g_{j'2}(x):=\sum_{k\in \ZZ} \langle b, V^k h_{j'2} \rangle_{\Hc_2}\,{\rm e}^{2\pi i kx}
\end{equation}
belong to $L^2(0,1)$. Thus, we deduce the following result:
\begin{prop}
Any element $x\in \Ac_{a,b}$ can be recovered from the sequence of its samples 
$\Big\{\Lc_{jj'}x(rn,\bar{r}m) \Big\}_{\substack{n\in \ZZ;\,j=1,2,\ldots,s \\ m\in \ZZ;\,j'=1,2,\ldots,s'}}$ in a stable way (by means of a frame expansion) if and only if the sequence $\Big\{\overline{g_{j1}(x)}\,{\rm e}^{2\pi i rnx} \otimes \overline{g_{j'2}(x)}\,{\rm e}^{2\pi i \bar{r}mx} \Big\}_{\substack{n\in \ZZ;\,j=1,2,\ldots,s \\ m\in \ZZ;\,j'=1,2,\ldots,s'}}$ is a frame for $L^2(0,1)\otimes L^2(0,1)$.
\end{prop}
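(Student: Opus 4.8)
The plan is to push the whole question through the isomorphism $\Tc^{UV}_{a,b}$ into the auxiliary space $L^2(0,1)\otimes L^2(0,1)$, where the samples become genuine analysis coefficients. Write $\Phi_{j,j',n,m}:=\overline{g_{j1}}\,{\rm e}^{2\pi i rn\,\cdot}\otimes\overline{g_{j'2}}\,{\rm e}^{2\pi i\bar{r}m\,\cdot}\in L^2(0,1)\otimes L^2(0,1)$ for the elements displayed in the statement, and for $x\in\Ac_{a,b}$ set $F:=(\Tc^{UV}_{a,b})^{-1}x$. By \eqref{sampexpression2} we then have $\Lc_{jj'}x(rn,\bar{r}m)=\langle F,\Phi_{j,j',n,m}\rangle_{L^2(0,1)\otimes L^2(0,1)}$ for all indices, so the sequence of samples of $x$ is exactly the analysis sequence of $F$ against $\{\Phi_{j,j',n,m}\}$.

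Since $\Tc^{UV}_{a,b}$ is bounded with bounded inverse, there are constants $0<c\le C$ with $c\|F\|\le\|x\|\le C\|F\|$, and $\sum_{j,j',n,m}|\Lc_{jj'}x(rn,\bar{r}m)|^2=\sum_{j,j',n,m}|\langle F,\Phi_{j,j',n,m}\rangle|^2$. This is the conceptual crux: a two-sided sampling-stability estimate $A\|x\|^2\le\sum|\Lc_{jj'}x(rn,\bar{r}m)|^2\le B\|x\|^2$ for all $x\in\Ac_{a,b}$ holds if and only if the frame inequalities for $\{\Phi_{j,j',n,m}\}$ hold in $L^2(0,1)\otimes L^2(0,1)$. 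For the reverse implication I would then make the recovery explicit: assuming $\{\Phi_{j,j',n,m}\}$ is a frame, choose any dual frame $\{\widetilde\Phi_{j,j',n,m}\}$ so that $F=\sum\langle F,\Phi_{j,j',n,m}\rangle\,\widetilde\Phi_{j,j',n,m}=\sum\Lc_{jj'}x(rn,\bar{r}m)\,\widetilde\Phi_{j,j',n,m}$, and apply $\Tc^{UV}_{a,b}$, which carries frames to frames, to obtain the stable frame expansion $x=\sum\Lc_{jj'}x(rn,\bar{r}m)\,\Tc^{UV}_{a,b}\widetilde\Phi_{j,j',n,m}$ of $x$ in terms of its own samples.

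For the direct implication, a stable frame-type reconstruction $x=\sum\Lc_{jj'}x(rn,\bar{r}m)\,C_{j,j',n,m}$ with $\{C_{j,j',n,m}\}$ a frame for $\Ac_{a,b}$ transports under $(\Tc^{UV}_{a,b})^{-1}$, via \eqref{sampexpression2}, to the reconstruction identity $F=\sum\langle F,\Phi_{j,j',n,m}\rangle\,\psi_{j,j',n,m}$ in $L^2(0,1)\otimes L^2(0,1)$, where $\psi_{j,j',n,m}=(\Tc^{UV}_{a,b})^{-1}C_{j,j',n,m}$ is again a frame, hence Bessel. The one delicate point, and the main obstacle, is to deduce from such a reconstruction that $\{\Phi_{j,j',n,m}\}$ is itself a frame and not merely a complete system. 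I would settle this as in the proof of Theorem \ref{samp1}: first observe that the boundedness half of stability forces $\{\Phi_{j,j',n,m}\}$ to be Bessel (equivalently, using that a tensor product of two Bessel sequences is Bessel together with Lemma \ref{l2}(b) applied to the two factors, that $g_{j1},g_{j'2}\in L^\infty(0,1)$), and then invoke \cite[Lemma 5.6.2]{ole:03}, which upgrades a Bessel sequence admitting a Bessel reconstruction to a genuine frame. This closes the equivalence.
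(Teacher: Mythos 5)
Your proposal is correct and follows essentially the same route as the paper: you transport the problem through the isomorphism $\Tc^{UV}_{a,b}$ so that, by \eqref{sampexpression2}, the samples become the analysis coefficients of $F=(\Tc^{UV}_{a,b})^{-1}x$ against the displayed sequence, use a dual frame for the sufficiency direction, and for necessity upgrade the Bessel sequence admitting a reconstruction to a frame via \cite[Lemma 5.6.2]{ole:03} --- exactly the mechanism the paper uses in the proof of Theorem \ref{samp1}, whose argument it declares carries over verbatim to this case. The only cosmetic remark is that your parenthetical equivalence (tensor Bessel iff $g_{j1},g_{j'2}\in L^\infty(0,1)$) is not needed, since, as you note, the Bessel property already follows directly from the upper stability bound.
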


The thesis of the above proposition is true if and only if $0<\alpha_{\mathbf{G}_1}\le \beta_{\mathbf{G}_1}<\infty$ and $0<\alpha_{\mathbf{G}_2}\le \beta_{\mathbf{G}_2}<\infty$, where $\mathbf{G}_1$ and $\mathbf{G}_2$ denote the $s\times r$ and $s'\times \bar{r}$ matrices defined in \eqref{Gmatrix} for $g_{j1}$, $j=1, 2, \dots, s$, and $g_{j'2}$, $j'=1, 2, \dots, s'$, respectively.

\subsection{The sampling result}
%%%%%%%%%%%%%%%%%%%%%%%
In case $\Big\{\overline{g_{j1}(x)}\,{\rm e}^{2\pi i rnx} \otimes \overline{g_{j'2}(x)}\,{\rm e}^{2\pi i \bar{r}mx} \Big\}_{\substack{n\in \ZZ;\,j=1,2,\ldots,s \\ m\in \ZZ;\,j'=1,2,\ldots,s'}}$ is a frame for $L^2(0,1)\otimes L^2(0,1)$, a family of dual frames is given by $\Big\{rh_{j1}(x)\,{\rm e}^{2\pi i rnx} \otimes \bar{r}h_{j'2}(x)\,{\rm e}^{2\pi i \bar{r}mx} \Big\}_{\substack{n\in \ZZ;\,j=1,2,\ldots,s \\ m\in \ZZ;\,j'=1,2,\ldots,s'}}$ where the functions $h_{j1}$, $j=1,2,\ldots,s$, and $h_{j'2}$, $j'=1,2,\ldots,s'$, in $L^\infty(0, 1)$, satisfy
\[
\begin{split}
\big[h_{11}(x), h_{21}(x), \dots, h_{s1}(x)\big] \mathbf{G}_1(x)&=[1, 0, \dots, 0]\,, \\
\big[h_{12}(x), h_{22}(x), \dots, h_{s'2}(x)\big] \mathbf{G}_2(x)&=[1, 0, \dots, 0] \quad \text{a.e. in $(0, 1)$}\,.
\end{split}
\]
Thus, for each $x\in \Ac_{a,b}$ there exists a unique $F\in L^2(0,1)\otimes L^2(0,1)$ such that $\Tc^{UV}_{a,b} F=x$. This $F$ can be expressed in $L^2(0,1)\otimes L^2(0,1)$ as the frame expansion
\[
F=\sum_{j=1}^s \sum_{j'=1}^{s'}\sum_{n\in \ZZ} \sum_{m\in \ZZ}\Lc_{jj'}x(rn,\bar{r}m)\, \Big(rh_{j1}(x)\,{\rm e}^{2\pi i rnx} \otimes \bar{r}h_{j'2}(x)\,{\rm e}^{2\pi i \bar{r}mx} \Big)\,. %\quad \text{ in } L^2(0,1)\otimes L^2(0,1)\,.
\]
Then, applying the isomorphism $\Tc^{UV}_{a,b}$ and  \eqref{shift2} one gets
\[
\begin{split}
x&= \sum_{j=1}^s \sum_{j'=1}^{s'}\sum_{n\in \ZZ} \sum_{m\in \ZZ}  \Lc_{jj'}x(rn,\bar{r}m)\, \Tc^{UV}_{a,b}\big(rh_{j1}(x)\,{\rm e}^{2\pi i rnx} \otimes \bar{r}h_{j'2}(x)\,{\rm e}^{2\pi i \bar{r}mx} \big) \\
&= \sum_{j=1}^s \sum_{j'=1}^{s'}\sum_{n\in \ZZ} \sum_{m\in \ZZ}  \Lc_{jj'}x(rn,\bar{r}m)\, U^{rn} \big(\Tc^U_a rh_{j1} \big) \otimes V^{\bar{r}m} \big(\Tc^V_b  \bar{r}h_{j'2}\big)\\
&=\sum_{j=1}^s \sum_{j'=1}^{s'}\sum_{n\in \ZZ} \sum_{m\in \ZZ}  \Lc_{jj'}x(rn,\bar{r}m)\, \big(U^{rn} \otimes V^{\bar{r}m} \big)\, ( c_{j1}\otimes d_{j'2})\,,\\
\end{split}
\]
where $c_{j1}=\Tc^U_a (rh_{j1}) \in \Ac_a$, $j=1, 2, \dots ,s$ and $d_{j'2}=\Tc^V_b (\bar{r}h_{j'2})\in \Ac_b$, $j'=1, 2, \dots , s'$. Next, we state the equivalent result to Theorem \ref{samp1}; its proof goes in the same manner:
\begin{teo}
\label{samp2}
Let $h_{jj'}:=h_{j1}\otimes h_{j'2} \in \Hc_1\otimes \Hc_2$, $j=1,2, \dots,s$ and $j'=1,2, \dots,s'$. Let $\Lc_{jj'}$ be the associated $UV$-system  giving the samples of any $x\in\Ac_{a,b}$ as in \eqref{samples2}, $j=1,2, \dots,s$ and $j'=1,2, \dots,s'$. Assume that the functions $g_{j1}$ and $g_{j'2}$, $j=1,2,\dots,s$ and $j'=1,2, \dots,s'$, given in \eqref{gj12} belongs to 
$L^\infty(0, 1)$; or equivalently, that $\beta_{\mathbf{G}_1}<\infty$ and $\beta_{\mathbf{G}_2}<\infty$ for the correspondent $s\times r$ and 
$s'\times \bar{r}$ matrices $\mathbf{G}_1(x)$ and $\mathbf{G}_2(x)$, respectively, defined in \eqref{Gmatrix}. The following statements are equivalent:
\begin{enumerate}[(a)]
\item $\alpha_{\mathbf{G}_1}>0$ and $\alpha_{\mathbf{G}_2}>0$.
\item There exist $S=ss'$ elements $c_j\otimes d_{j'}$ in the subspace $\mathcal{A}_{a,b}$, $j=1,2, \dots, s$ and $j'=1,2, \dots,s'$, such that the sequence 
$\big\{\big(U^{rn} \otimes V^{\bar{r}m} \big)\, ( c_j\otimes d_{j'})\big\}_{\substack{n\in \ZZ;\,j=1,2,\ldots,s \\ m\in \ZZ;\,j'=1,2,\ldots,s'}}$ is a frame for $\mathcal{A}_{a,b}$, and for any 
$x\in \mathcal{A}_{a,b}$ the expansion
\begin{equation}
\label{framesampling2}
x=\sum_{j=1}^s \sum_{j'=1}^{s'}\sum_{n\in \ZZ} \sum_{m\in \ZZ}  \Lc_{jj'}x(rn,\bar{r}m)\, \big(U^{rn} \otimes V^{\bar{r}m} \big)\, ( c_j\otimes d_{j'}) \quad \text{in $\Hc_1\otimes \Hc_2$}\,,
\end{equation}
holds.
\item There exists a frame $\big\{C_{j,j',n,m}\big\}_{\substack{n\in \ZZ;\,j=1,2,\ldots,s \\ m\in \ZZ;\,j'=1,2,\ldots,s'}}$ for $\Ac_{a,b}$ such that, for each $x\in \Ac_{a,b}$ the expansion
\[
x=\sum_{j=1}^s \sum_{j'=1}^{s'}\sum_{n\in \ZZ} \sum_{m\in \ZZ}  \Lc_{jj'}x(rn,\bar{r}m)\, C_{j,j',n,m} \quad \text{in $\Hc_1\otimes \Hc_2$}\,,
\]
holds.
\end{enumerate}
\end{teo}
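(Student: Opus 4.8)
The plan is to mirror the proof of Theorem \ref{samp1}, the single structural change being that the finite factor $\ell^2_N(\ZZ)$ is now replaced by a second copy of $L^2(0,1)$; consequently the finite-dimensional characterization of Lemma \ref{l3} is superseded by a second application of Lemma \ref{l2}. I would establish the cycle $(a)\Rightarrow(b)\Rightarrow(c)\Rightarrow(a)$, of which only the last implication requires genuine work.

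For $(a)\Rightarrow(b)$: under the standing Bessel hypotheses $\beta_{\mathbf{G}_1}<\infty$ and $\beta_{\mathbf{G}_2}<\infty$, the assumption $\alpha_{\mathbf{G}_1}>0$ and $\alpha_{\mathbf{G}_2}>0$ combined with part $(c)$ of Lemma \ref{l2} shows that each factor $\big\{\overline{g_{j1}(x)}\,{\rm e}^{2\pi i rnx}\big\}_{n\in\ZZ;\,j}$ and $\big\{\overline{g_{j'2}(x)}\,{\rm e}^{2\pi i \bar{r}mx}\big\}_{m\in\ZZ;\,j'}$ is a frame for $L^2(0,1)$. By the tensor product characterization of frames recalled in Section \ref{section1}, their tensor product is then a frame for $L^2(0,1)\otimes L^2(0,1)$. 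Choosing the bounded functions $h_{j1},h_{j'2}$ as indicated before the theorem produces the scalar dual frames $\big\{rh_{j1}(x)\,{\rm e}^{2\pi i rnx}\big\}$ and $\big\{\bar{r}h_{j'2}(x)\,{\rm e}^{2\pi i \bar{r}mx}\big\}$, and the lemma on dual frames in tensor products guarantees that their tensor product is a dual frame of the analyzing sequence. Expanding $F=\big(\Tc^{UV}_{a,b}\big)^{-1}x$ in this dual frame, using \eqref{sampexpression2} to recognize the coefficients as the samples $\Lc_{jj'}x(rn,\bar{r}m)$, and transporting the expansion through $\Tc^{UV}_{a,b}$ by the shifting property \eqref{shift2}, yields precisely \eqref{framesampling2} with $c_j=\Tc^U_a(rh_{j1})$ and $d_{j'}=\Tc^V_b(\bar{r}h_{j'2})$. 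Since $\Tc^{UV}_{a,b}$ is an isomorphism, the resulting sampling family is the image of a frame, hence a frame for $\Ac_{a,b}$.

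The implication $(b)\Rightarrow(c)$ is immediate upon setting $C_{j,j',n,m}=\big(U^{rn}\otimes V^{\bar{r}m}\big)(c_j\otimes d_{j'})$. The only step I expect to be delicate is $(c)\Rightarrow(a)$. Applying $\big(\Tc^{UV}_{a,b}\big)^{-1}$ to the expansion in $(c)$ and using \eqref{sampexpression2} converts it into a reconstruction formula $F=\sum \big\langle F, \overline{g_{j1}(x)}\,{\rm e}^{2\pi i rnx}\otimes \overline{g_{j'2}(x)}\,{\rm e}^{2\pi i \bar{r}mx}\big\rangle\,\big(\Tc^{UV}_{a,b}\big)^{-1}(C_{j,j',n,m})$ valid for every $F\in L^2(0,1)\otimes L^2(0,1)$. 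The synthesizing sequence $\big\{\big(\Tc^{UV}_{a,b}\big)^{-1}(C_{j,j',n,m})\big\}$ is Bessel, being the image under the bounded operator $\big(\Tc^{UV}_{a,b}\big)^{-1}$ of the frame in $(c)$, while the analyzing sequence is Bessel by the hypotheses $\beta_{\mathbf{G}_1},\beta_{\mathbf{G}_2}<\infty$ via Lemma \ref{l2}$(b)$ together with the fact that the tensor product of Bessel sequences is Bessel. The crux is then to invoke \cite[Lemma 5.6.2]{ole:03}, which upgrades a Bessel sequence admitting such a reconstruction formula to a frame; this yields that the analyzing sequence is a frame for $L^2(0,1)\otimes L^2(0,1)$. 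A final appeal to the tensor product frame characterization forces each factor $\big\{\overline{g_{j1}(x)}\,{\rm e}^{2\pi i rnx}\big\}$ and $\big\{\overline{g_{j'2}(x)}\,{\rm e}^{2\pi i \bar{r}mx}\big\}$ to be a frame for $L^2(0,1)$, whence Lemma \ref{l2}$(c)$ delivers $\alpha_{\mathbf{G}_1}>0$ and $\alpha_{\mathbf{G}_2}>0$, closing the cycle.
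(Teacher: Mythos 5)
Your proposal is correct and follows essentially the same route as the paper: the paper proves Theorem \ref{samp2} by remarking that the argument of Theorem \ref{samp1} carries over verbatim, namely $(a)\Rightarrow(b)$ via the tensor-product dual frame construction and the shifting property \eqref{shift2}, $(b)\Rightarrow(c)$ trivially, and $(c)\Rightarrow(a)$ by transporting the expansion through $\big(\Tc^{UV}_{a,b}\big)^{-1}$, checking the Bessel hypotheses, and invoking \cite[Lemma 5.6.2]{ole:03} together with the tensor-product frame characterization --- with the only structural change, as you note, being that Lemma \ref{l3} is replaced by a second application of Lemma \ref{l2}.
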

In case $s=r$ and $s'=\bar{r}$ we are in the Riesz bases setting and theorem above admits the following corollary:
\begin{cor}
In addition to the hypotheses  of theorem above, assume that $s=r$ and $s'=\bar{r}$. The following statements are equivalent:
\begin{enumerate}
\item $\alpha_{\mathbf{G}_1}>0$ and $\alpha_{\mathbf{G}_2}>0$.
\item There exist $r\bar{r}$ unique elements $c_j\otimes d_{j'}$ in the subspace $\mathcal{A}_{a,b}$, $j=1,2, \dots, r$ and $j'=1,2,\ldots,\bar{r}$, such that the sequence 
$\big\{\big(U^{rn} \otimes V^{rm} \big)\, ( c_j\otimes d_{j'})\big\}_{\substack{n\in \ZZ;\,j=1,2,\ldots,r \\ m\in \ZZ;\,j'=1,2,\ldots,\bar{r}}}$ is a Riesz basis for 
$\mathcal{A}_{a,b}$, and the expansion of any $x\in \mathcal{A}_{a,b}$ with respect to this basis is
\begin{equation*}
x=\sum_{j=1}^r \sum_{j'=1}^{\bar{r}}\sum_{n\in \ZZ} \sum_{m\in \ZZ}  \Lc_{jj'}x(rn,\bar{r}m)\, \big(U^{rn} \otimes V^{rm} \big)\, ( c_j\otimes d_{j'}) \quad \text{in $\Hc_1\otimes \Hc_2$}\,.
\end{equation*}
\end{enumerate}
In case the equivalent conditions are satisfied, the vectors $c_j\otimes d_{j'}$, $j=1, 2, \dots, r$ and $j'=1,2,\ldots,\bar{r}$, satisfy the interpolation property 
\[
\Lc_{kk'} \big(c_j\otimes d_{j'}\big) (rn,rm)=\delta_{j,k}\,\delta_{j',k'}\,\delta_{n,0}\,\delta_{m,0}\,,
\]
whenever $n, m\in \ZZ$, $j,k=1, 2, \dots, r$ and $j',k'=1, 2, \dots, \bar{r}$.
\end{cor}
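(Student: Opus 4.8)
The plan is to obtain this corollary as the square specialization ($s=r$, $s'=\bar r$) of Theorem \ref{samp2}. The equivalence of the two conditions is almost immediate from that theorem: its equivalence $(a)\Leftrightarrow(b)$ already shows that $\alpha_{\mathbf{G}_1}>0$ and $\alpha_{\mathbf{G}_2}>0$ is equivalent to the existence of elements $c_j\otimes d_{j'}$ whose unitary orbit $\big\{(U^{rn}\otimes V^{rm})(c_j\otimes d_{j'})\big\}$ is a frame reproducing the sampling expansion \eqref{framesampling2}. What remains, under the extra hypothesis $s=r$ and $s'=\bar r$, is to promote this frame to a Riesz basis and then to establish both the uniqueness of the generators $c_j\otimes d_{j'}$ and the interpolation property.

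To upgrade the frame to a Riesz basis I would argue factor by factor. By part (d) of Lemma \ref{caracterizacion}, when $s=r$ the frame $\big\{\overline{g_{j1}(x)}\,{\rm e}^{2\pi i rnx}\big\}$ is in fact a Riesz basis for $L^2(0,1)$, and likewise $\big\{\overline{g_{j'2}(x)}\,{\rm e}^{2\pi i \bar r mx}\big\}$ is a Riesz basis for $L^2(0,1)$ when $s'=\bar r$. Since the tensor product of two Riesz bases is a Riesz basis (recalled in Section \ref{section1}), the analysis system appearing in \eqref{sampexpression2} is a Riesz basis for $L^2(0,1)\otimes L^2(0,1)$. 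Transporting it through the isomorphism $\Tc^{UV}_{a,b}$ together with the shifting property \eqref{shift2}, exactly as in the computation preceding the theorem, yields that $\big\{(U^{rn}\otimes V^{rm})(c_j\otimes d_{j'})\big\}$ is a Riesz basis for $\Ac_{a,b}$, because a topological isomorphism carries Riesz bases to Riesz bases.

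For uniqueness I would note that in the square case each matrix $\mathbf{G}_k(x)$ is $r\times r$ (resp.\ $\bar r\times\bar r$) and of full rank a.e., so its Moore--Penrose pseudo-inverse coincides with the ordinary inverse and the idempotent $\mathbf{I}-\mathbf{G}_k\mathbf{G}_k^\dag$ vanishes a.e.; hence the free parameter in the general description of the left-inverses drops out, and the rows $[h_{11},\dots,h_{r1}]$ and $[h_{12},\dots,h_{\bar r 2}]$ solving the defining linear systems are uniquely determined. Consequently $c_j=\Tc^U_a(rh_{j1})$ and $d_{j'}=\Tc^V_b(\bar r h_{j'2})$ are unique. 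Equivalently, one may simply invoke that a Riesz basis possesses a unique dual Riesz basis.

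Finally, the interpolation property follows by evaluating the Riesz-basis expansion \eqref{framesampling2} at the particular vector $x=c_j\otimes d_{j'}$. This vector trivially equals $(U^{0}\otimes V^{0})(c_j\otimes d_{j'})$, i.e.\ the basis element whose multi-index is $(k,k',n,m)=(j,j',0,0)$; on the other hand the sampling expansion represents it with coefficients $\Lc_{kk'}(c_j\otimes d_{j'})(rn,rm)$. The uniqueness of coefficients in a Riesz basis forces the two coefficient arrays to agree, which is precisely the stated interpolation identity. I expect no genuine obstacle here: the whole argument is essentially a transcription of the one-line proof of the corollary in Section \ref{section2}, and the only point deserving a line of care is that $\Tc^{UV}_{a,b}$, being a bounded bijection with bounded inverse, both preserves the Riesz-basis property and transports the coefficient sequences unchanged.
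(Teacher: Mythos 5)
Your proposal is correct and takes essentially the same route as the paper, which disposes of the analogous corollary in Section \ref{section2} in one line via Lemma \ref{caracterizacion}$(d)$ (frame plus $s=r$ gives a Riesz basis, tensored and transported through $\Tc^{UV}_{a,b}$) together with the uniqueness of coefficients in a Riesz-basis expansion --- precisely the ingredients you assemble, including the collapse of the Moore--Penrose pseudo-inverse to the ordinary inverse in the square case. The only cosmetic point is that the sequence carried by $\Tc^{UV}_{a,b}$ onto $\big\{(U^{rn}\otimes V^{\bar{r}m})(c_j\otimes d_{j'})\big\}$ is the dual Riesz basis $\big\{rh_{j1}(x)\,{\rm e}^{2\pi i rnx}\otimes \bar{r}h_{j'2}(x)\,{\rm e}^{2\pi i \bar{r}mx}\big\}$ rather than the analysis system of \eqref{sampexpression2} itself, a distinction your own appeal to the uniqueness of the dual Riesz basis already covers.
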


\subsection*{A representative example}
%%%%%%%%%%%%%%%%%%%%%%%%
In $\Hc_1=\Hc_2:=L^2(\RR)$ consider $U=V$ the shift operator $f(x) \mapsto f(x-1)$. Let  $\varphi, \psi\in L^2(\RR)$ be two  functions such that the sequences $\big\{\varphi(x-n)\big\}_{n\in \ZZ}$ and $\big\{\psi(y-m)\big\}_{m\in \ZZ}$ are Riesz sequences for $L^2(\RR)$ (for instance, the functions 
$\varphi$, $\psi$ may be two $B$-splines). 

\medskip

In the tensor product $L^2(\RR) \otimes L^2(\RR)=L^2(\RR^2)$ consider the closed subspace $\Ac_{\varphi,\psi}:=\Ac_\varphi \otimes \Ac_\psi$, i.e., the tensor product of the shift-invariant subspaces $\Ac_\varphi$ and $\Ac_\psi$ of $L^2(\RR)$. Given the functions $h_{j1}, h_{j'2} \in L^2(\RR)$, $j=1, 2, \dots, s$ and $j'=1,2,\ldots,s'$, and fixing the sampling periods $r$ and $\bar{r}$ in $\NN$, for each $f$ in $\Ac_{\varphi,\psi}$ we consider its samples defined by
\[
\Lc_{jj'}f(rn,\bar{r}m):=\int_{-\infty}^\infty \int_{-\infty}^\infty f(x,y)\, \overline{h_{j1}(x-rn)}\,\overline{h_{j'2}(y-\bar{r}m)}dxdy\,, \quad n, m\in \ZZ\,.
\]
Under the hypotheses in Theorem \ref{samp2} there will exist functions $S_j\in \Ac_\varphi$, $j=1, 2, \dots, s$ and $\widetilde{S}_{j'} \in \Ac_\psi$, $j'=1, 2, \dots, s'$, such that for any $f\in \Ac_{\varphi,\psi}$ the sampling expansion \eqref{framesampling2} reads:
\[
f(x,y)=\sum_{j=1}^s  \sum_{j'=1}^{s'}\sum_{n\in \ZZ} \sum_{m\in \ZZ} \Lc_{jj'}f(rn,\bar{r}m)\,S_j(x-rn)\,\widetilde{S}_{j'}(y-\bar{r}m)\quad \text{ in $L^2(\RR^2)$}\,.
\]

As in the example of the infinite-finite case, assuming that the generators $\varphi$, $\psi$ are continuous functions on $\RR$ such that the sums $\sum_{n\in \ZZ}|\varphi (x-n)|^2$ and $\sum_{m\in \ZZ}|\psi (y-m)|^2$ are bounded on $[0, 1]$, it is easy to deduce that $\Ac_{\varphi,\psi}$ is a RKHS of continuous functions on $\RR^2$.  Furthermore, the convergence in $L^2\big(\RR^2\big)$ implies pointwise convergence which is uniform on $\RR^2$.

%%%%%%%%%%%%%%%%%%%%%%%% 
\section{Finite-finite generators case}
\label{section4}
%%%%%%%%%%%%%%%%%%%%
Let $\Hc_1$, $\Hc_2$ be two separable Hilbert spaces, and $U:\Hc_1 \longrightarrow \Hc_1$, $V:\Hc_2 \longrightarrow \Hc_2$ two unitary operators. Consider two elements $a\in \Hc_1$ and $b\in \Hc_2$ such that, for some $N,M\in \NN$, $U^N a=a$ and $V^M b=b$ and the sets $\big\{a, Ua, U^2a,\dots,U^{N-1}b\big\}$ and $\big\{b, Vb, V^2b,\dots,V^{M-1}b\big\}$ are  linearly independent in $\Hc_1$ and $\Hc_2$ respectively. In the tensor product Hilbert space $\Hc_1\otimes \Hc_2$ we consider the finite subspace
\[
\Ac_{a,b}:={\rm span} \Big\{U^pa\otimes V^qb\Big\}_{\substack{p=0, 1, \dots, N-1\\ q=0, 1, \dots, M-1}}=\Big\{\sum_{p=0}^{N-1} \sum_{q=0}^{M-1} a_{pq}\, U^pa\otimes V^qb \Big\}\,.
\]
The subspace $\Ac_{a,b}$ coincides with the tensor product $\Ac_a \otimes \Ac_b$ of the finite subspaces $\Ac_a=\big\{\sum_{p=0}^{N-1} a_p\, U^p a, :\, a_p\in \CC\big\}\subset \Hc_1$ and $\Ac_b=\big\{\sum_{q=0}^{M-1} b_q\, V^m b\, :\, b_q\in \CC\big\}\subset \Hc_2$. We will refer to the vectors $\{a, b\}$ as the finite-finite generators of the subspace $\Ac_{a,b}$ in $\Hc_1\otimes \Hc_2$.

\subsection*{The samples}
%%%%%%%%%%%%%%
Fixed $S=ss'$ elements $h_{jj'}:=h_{j1}\otimes h_{j'2} \in \Hc_1\otimes \Hc_2$, $j=1,2, \dots,s$ and $j'=1,2, \dots,s'$, we consider two sampling periods, 
$r$ a divisor  of $N$ and $\bar{r}$  a divisor of $M$; denote $\ell:=N/r$ and $\bar{\ell}:=M/\bar{r}$. For each $x\in \Ac_{a,b}$ we introduce  its 
$S\ell \bar{\ell}$ generalized samples
\[
\Big\{\Lc_{jj'}x(rn,\bar{r}m) \Big\}_{\substack{n=0, 1, \dots, \ell-1;\, j=1, 2, \dots,s\\ m=0, 1, \dots, \bar{\ell}-1;\, j'=1, 2, \dots,s'}}
\]
defined by
\begin{equation}
\label{samples3}
\Lc_{jj'}x(rn,\bar{r}m):=\big\langle x, U^{rn}h_{j1}\otimes V^{\bar{r}m} h_{j'2}\big\rangle_{\Hc_1\otimes \Hc_2}\,,
\end{equation}
where $n=0, 1, \dots, \ell-1$,  $m=0,1,\ldots, \bar{\ell}-1$, $j=1,2,\dots, s$ and $j'=1,2,\dots, s'$.

\subsection*{The isomorphism $\Tc^{UV}_{a,b}$}
%%%%%%%%%%%%%%%%%%%%%%%%%
As in the former cases, now we introduce the isomorphism $\Tc^{UV}_{a,b}$ which maps the orthonormal basis 
$\Big\{\mathbf{e}_p \otimes \mathbf{\widetilde{e}}_q\Big\}_{\substack{p=0, 1, \dots, N-1\\ q=0, 1, \dots, M-1}}$ for the Hilbert space 
$\ell^2_N(\ZZ)\otimes \ell^2_M(\ZZ)$ onto the basis $\Big\{U^pa\otimes V^qb \Big\}_{\substack{p=0, 1, \dots, N-1\\ q=0, 1, \dots, M-1}}$ for 
$\Ac_{a,b}$; here $\{\mathbf{e}_p\}_{p=0}^{N-1}$ and $\{ \mathbf{\widetilde{e}}_q\}_{q=0}^{M-1}$ denote, respectively, the canonical bases for 
$\ell^2_N(\ZZ)$ and $\ell^2_M(\ZZ)$. In other words, $\Tc^{UV}_{a,b}=\Tc^U_{a,N} \otimes \Tc^V_{b,M}$, where $\Tc^U_{a,N}$ and $\Tc^V_{b,M}$ denote the isomorphisms
\[
\begin{array}[c]{ccll}
\Tc_{a,N}^U: & \ell^2_N(\ZZ) & \longrightarrow & \Ac_a\\
        & \mathbf{e}_p & \longmapsto & U^p a  
\end{array} \quad \text{and} \quad       
        \begin{array}[c]{ccll}
\Tc^V_{b,M}: & \ell^2_M(\ZZ) & \longrightarrow & \mathcal{A}_b\\
        & \mathbf{\widetilde{e}}_q & \longmapsto & V^q b\,.
\end{array}
\]
Here, the shifting property reads:
\begin{equation}
\label{shift3}
\begin{split}
\Tc^{UV}_{a,b}\big(  \mathbb{T}_{N-p} \otimes  \mathbb{\widetilde{T}}_{M-q}\big)&=U^p\big(\Tc_{a,N}^U \mathbb{T}_0 \big)\otimes V^q\big( \Tc_{b,M}^V \mathbb{\widetilde{T}}_0\big)\\
&=(U^p \otimes V^q)\, \Tc^{UV}_{a,b}(\mathbb{T}_0 \otimes \mathbb{\widetilde{T}}_0)\,,
\end{split}
\end{equation}
where 
\[
\mathbb{T}_0=\big(T(0),T(1), \dots,T(N-1)\big),\, \mathbb{T}_{N-p}=\big(T(N-p), T(N-p+1), \dots, T(N-p+N-1)\big) 
\]
belong to $\ell^2_N(\ZZ)$ and $1\le p \le N-1$, and
\[
\mathbb{\widetilde{T}}_0=\big(\widetilde{T}(0), \widetilde{T}(1), \dots, \widetilde{T}(M-1)\big),\, \mathbb{\widetilde{T}}_{M-q}=\big(\widetilde{T}(M-q), \widetilde{T}(M-q+1),\dots,\widetilde{T}(M-q+M-1)\big)
\]
belong to $\ell^2_M(\ZZ)$ and  $1\le q \le M-1$. The proof of \eqref{shift3} goes in the same manner as in Lemma \ref{shift}.

\subsection*{An expression for the samples}
%%%%%%%%%%%%%%%%%%%%%%%
For  any $x=\sum_{p=0}^{N-1}\sum_{q=0}^{M-1}  a_{pq}\, U^pa\otimes V^qb$ in $\Ac_{a,b}$ we have
\[
\begin{split}
\Lc_{jj'}x(rn,\bar{r}m)&=\Big\langle \sum_{p=0}^{N-1}\sum_{q=0}^{M-1}  a_{pq}\, U^pa\otimes V^qb, U^{rn}h_{j1}\otimes V^{\bar{r}m}h_{j'2} \Big\rangle_{\Hc_1\otimes \Hc_2} \\
&=\sum_{p=0}^{N-1}\sum_{q=0}^{M-1}  a_{pq}\, \big\langle U^pa\otimes V^qb, U^{rn}h_{j1}\otimes V^{\bar{r}m}h_{j'2} \big\rangle_{\Hc_1\otimes \Hc_2}
\end{split}
\]
Now, consider $F:=\sum_{p=0}^{N-1}\sum_{q=0}^{M-1}  a_{pq}\, \mathbf{e}_p \otimes \mathbf{\widetilde{e}}_q$, the element in 
$\ell^2_N(\ZZ)\otimes \ell^2_M(\ZZ)$ such that $\Tc^{UV}_{a,b} F=x$. Thus, Plancherel identity for orthonormal bases gives
\begin{equation*}
\begin{split}
\Lc_{jj'}x(rn,\bar{r}m)&=\Big\langle F, \sum_{p=0}^{N-1}\sum_{q=0}^{M-1} \langle \overline{a, U^{rn-p} h_{j1}}\rangle_{\Hc_1}\, \langle \overline{b, V^{\bar{r}m-q} h_{j'2}}\rangle_{\Hc_2} \, \mathbf{e}_p \otimes \mathbf{\widetilde{e}}_q\Big\rangle_{\ell^2_N(\ZZ)\otimes \ell^2_M(\ZZ)} \\ 
&=\Big\langle F, \big(\sum_{p=0}^{N-1}\langle \overline{a, U^{rn-p} h_{j1}}\rangle_{\Hc_1}\,\mathbf{e}_p \big)\otimes \big( \sum_{q=0}^{M-1}\langle \overline{b, V^{\bar{r}m-q} h_{j'2}}\rangle_{\Hc_2}\,\mathbf{\widetilde{e}}_q \big)\Big\rangle_{\ell^2_N(\ZZ)\otimes \ell^2_M(\ZZ)} \\ 
\end{split}
\end{equation*}
That is, for $n=0, 1, \dots, \ell-1$, $m=0, 1, \dots, \bar{\ell}-1$, $j=1, 2, \dots, s$ and $j'=1, 2, \dots, s'$ we have got the following expression for the samples:
\begin{equation}
\label{sampexpression3}
\Lc_{jj'}x(rn,\bar{r}m)=\big\langle F,  \mathbb{G}_{j,n}^1 \otimes \mathbb{G}_{j',m}^2 \big\rangle_{\ell^2_N(\ZZ)\otimes \ell^2_M(\ZZ)}
\end{equation}
where
\[
\mathbb{G}_{j,n}^1:=\sum_{p=0}^{N-1} \langle \overline{a, U^{rn-p} h_{j1}}\rangle_{\Hc_1}\,\mathbf{e}_p=\sum_{p=0}^{N-1} \overline{R_{a,h_{j1}}(N+p-rm)}\, \mathbf{e}_p\,,
\]
and
\[
\mathbb{G}_{j',m}^2:=\sum_{q=0}^{M-1} \langle \overline{b, V^{\bar{r}m-q} h_{j'2}}\rangle_{\Hc_2}\,\mathbf{\widetilde{e}}_q=\sum_{q=0}^{M-1} \overline{R_{b,h_{j'2}}(M+q-\bar{r}m)}\, \mathbf{\widetilde{e}}_q\,.
\]
Here, $R_{a,h_{j1}}(k):=\langle U^k a, h_{j1}\rangle_{\Hc_1}$, $k\in \ZZ$,  denotes the ($N$-periodic) {\em cross-covariance} sequence between the sequences  $\{U^ka\}_{k\in \ZZ}$ and $\{U^kh_{j1}\}_{k\in \ZZ}$ in $\Hc_1$. Similarly, $R_{b,h_{j'2}}(k):=\langle V^k b, h_{j'2}\rangle_{\Hc_2}$, $k\in \ZZ$,
denotes the ($M$-periodic) {\em cross-covariance} sequence between the sequences  $\{V^kb\}_{k\in \ZZ}$ and $\{V^kh_{j'2}\}_{k\in \ZZ}$ in $\Hc_2$.
Thus, we deduce the following result:
\begin{prop}
Any element $x\in \Ac_{a,b}$ can be recovered from the sequence of its samples 
$\Big\{\Lc_{jj'}x(rn,\bar{r}m) \Big\}_{\substack{n=0, 1, \dots, \ell-1;\, j=1, 2, \dots,s\\ m=0, 1, \dots, \bar{\ell}-1;\, j'=1, 2, \dots,s'}}$ in a stable way (by means of a frame expansion) if and only if the sequence $\Big\{\mathbb{G}_{j,n}^1 \otimes \mathbb{G}_{j',m}^2 \Big\}_{\substack{n=0, 1, \dots, \ell-1;\, j=1, 2, \dots,s\\ m=0, 1, \dots, \bar{\ell}-1;\, j'=1, 2, \dots,s'}}$ is a frame for $\ell^2_N(\ZZ)\otimes \ell^2_M(\ZZ)$.
\end{prop}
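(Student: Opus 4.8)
The plan is to run, almost verbatim, the argument that precedes Theorem~\ref{samp1}, the only simplification being that everything now lives in the finite-dimensional space $\ell^2_N(\ZZ)\otimes \ell^2_M(\ZZ)$, so every Bessel condition is automatic and ``frame'' coincides with ``spanning set''. The whole proof pivots on the sample expression \eqref{sampexpression3}: for the unique $F\in \ell^2_N(\ZZ)\otimes \ell^2_M(\ZZ)$ with $\Tc^{UV}_{a,b}F=x$, the generalized samples are exactly the inner products $\big\langle F, \mathbb{G}_{j,n}^1 \otimes \mathbb{G}_{j',m}^2\big\rangle$. Thus the samples of $x$ are the analysis coefficients of $F$ against the candidate sequence, and the isomorphism $\Tc^{UV}_{a,b}$ transports frame expansions faithfully between the two Hilbert spaces. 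I read ``stable recovery by means of a frame expansion'' as in statement $(c)$ of Theorem~\ref{samp1}: the existence of a frame $\big\{C_{j,j',n,m}\big\}$ for $\Ac_{a,b}$ reconstructing every $x$ from its samples.

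For the sufficiency direction I would assume $\big\{\mathbb{G}_{j,n}^1 \otimes \mathbb{G}_{j',m}^2\big\}$ is a frame for $\ell^2_N(\ZZ)\otimes \ell^2_M(\ZZ)$, pick any dual frame $\big\{D_{j,j',n,m}\big\}$, and write the reconstruction $F=\sum_{j,j',n,m}\big\langle F, \mathbb{G}_{j,n}^1 \otimes \mathbb{G}_{j',m}^2\big\rangle\, D_{j,j',n,m}$. Replacing the coefficients by the samples via \eqref{sampexpression3} and applying $\Tc^{UV}_{a,b}$ yields
\[
x=\sum_{j=1}^s\sum_{j'=1}^{s'}\sum_{n=0}^{\ell-1}\sum_{m=0}^{\bar{\ell}-1}\Lc_{jj'}x(rn,\bar{r}m)\, \Tc^{UV}_{a,b}\big(D_{j,j',n,m}\big)\,.
\]
Since a bounded invertible operator carries frames to frames, $\big\{\Tc^{UV}_{a,b}(D_{j,j',n,m})\big\}$ is a frame for $\Ac_{a,b}$, which is precisely stable recovery by a frame expansion.

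For the necessity direction I would start from a reconstructing frame $\big\{C_{j,j',n,m}\big\}$ for $\Ac_{a,b}$, apply $\big(\Tc^{UV}_{a,b}\big)^{-1}$, and invoke \eqref{sampexpression3} to obtain, for every $F\in \ell^2_N(\ZZ)\otimes \ell^2_M(\ZZ)$,
\[
F=\sum_{j=1}^s\sum_{j'=1}^{s'}\sum_{n=0}^{\ell-1}\sum_{m=0}^{\bar{\ell}-1}\big\langle F, \mathbb{G}_{j,n}^1 \otimes \mathbb{G}_{j',m}^2\big\rangle\, \big(\Tc^{UV}_{a,b}\big)^{-1}\big(C_{j,j',n,m}\big)\,.
\]
Here $\big\{\mathbb{G}_{j,n}^1 \otimes \mathbb{G}_{j',m}^2\big\}$ is a finite family in a finite-dimensional space, hence trivially Bessel, and $\big\{(\Tc^{UV}_{a,b})^{-1}(C_{j,j',n,m})\big\}$ is the image of a frame under an isomorphism, hence also Bessel. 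By \cite[Lemma 5.6.2]{ole:03}, exactly as in the proof of Theorem~\ref{samp1}, a reconstruction identity holding between two Bessel sequences forces $\big\{\mathbb{G}_{j,n}^1 \otimes \mathbb{G}_{j',m}^2\big\}$ to be a frame.

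I do not expect a genuine obstacle here: the finite-dimensionality dissolves every analytic subtlety present in the infinite cases, so the Bessel bounds are free and completeness coincides with the frame property. The only points demanding care are the bookkeeping of the quadruple index $(j,j',n,m)$ and pinning down the precise meaning of ``stable recovery''; once these are fixed, the pair $\big(\Tc^{UV}_{a,b},\,\eqref{sampexpression3}\big)$ together with \cite[Lemma 5.6.2]{ole:03} closes both implications. If one prefers to avoid Christensen's lemma altogether, the necessity direction can instead be phrased as a rank/completeness argument, since in $\ell^2_N(\ZZ)\otimes \ell^2_M(\ZZ)$ the reconstruction formula above immediately shows that $\big\{\mathbb{G}_{j,n}^1 \otimes \mathbb{G}_{j',m}^2\big\}$ spans, and a spanning set in finite dimension is a frame.
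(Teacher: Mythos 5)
Your proof is correct and follows essentially the route the paper intends: the proposition is exactly the transport, via the isomorphism $\Tc^{UV}_{a,b}$ and the sample identity \eqref{sampexpression3}, of the equivalence between stable recovery and the frame property, with sufficiency via a dual frame and necessity via \cite[Lemma 5.6.2]{ole:03} just as in the proof of Theorem~\ref{samp1}. Your closing remark that in finite dimension one may replace Christensen's lemma by a completeness/spanning argument is also consonant with the paper, which makes the same observation in Lemma~\ref{l3}.
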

The thesis of the above proposition is true if and only if the $s\ell \times N$ matrix $\mathbf{R}_{\mathbf{a},\mathbf{h_1}}$ (defined in \eqref{matrixcc} for 
$\mathbb{G}_{j,n}^1$) has rank $N$ and the $s'\bar{\ell} \times M$ matrix $\mathbf{R}_{\mathbf{b},\mathbf{h_2}}$ (defined in \eqref{matrixcc} for 
$\mathbb{G}_{j',m}^2$) has rank $M$. Notice that, necessarily, $s\geq r$ and $s'\geq r'$ and $ss'\ell \bar{\ell}\geq NM=\dim \big( \ell^2_N(\ZZ)\otimes \ell^2_M(\ZZ)\big)$.

\subsection{The sampling result}
%%%%%%%%%%%%%%%%%%%%%%%
In case the sequence $\Big\{\mathbb{G}_{j,n}^1 \otimes \mathbb{G}_{j',m}^2 \Big\}_{\substack{n=0, 1, \dots, \ell-1;\, j=1, 2, \dots,s\\ m=0, 1, \dots, \bar{\ell}-1;\, j'=1, 2, \dots,s'}}$ is a frame for $\ell^2_N(\ZZ)\otimes \ell^2_M(\ZZ)$, a family of appropriate dual frames is given by $\Big\{\mathbb{H}_{j,n}^1 \otimes \mathbb{H}_{j',m}^2 \Big\}_{\substack{n=0, 1, \dots, \ell-1;\, j=1, 2, \dots,s\\ m=0, 1, \dots, \bar{\ell}-1;\, j'=1, 2, \dots,s'}}$ where $\big\{\mathbb{H}_{j,n}^1\big\}_{\substack{n=0, 1, \dots, \ell-1\\ j=1, 2, \dots,s}}$ and $\big\{\mathbb{H}_{j',m}^2\big\}_{\substack{m=0, 1, \dots, \bar{\ell}-1\\ j'=1, 2, \dots,s'}}$ are appropriate dual frames of $\big\{\mathbb{G}_{j,n}^1\big\}_{\substack{n=0, 1, \dots, \ell-1\\ j=1, 2, \dots,s}}$ and $\big\{\mathbb{G}_{j',m}^2\big\}_{\substack{m=0, 1, \dots, \bar{\ell}-1\\ j'=1, 2, \dots,s'}}$ respectively. These dual frames are constructed as in Section \ref{subsection1} above (see \cite[Section 3.1]{garcia:15} for the details).

\medskip

Thus, for each $x\in \Ac_{a,b}$ there exists a unique $F\in \ell^2_N(\ZZ)\otimes \ell^2_M(\ZZ)$ such that $\Tc^{UV}_{a,b} F=x$. This $F$ can be expressed as the frame expansion
\[
F=\sum_{j=1}^s \sum_{j'=1}^{s'}\sum_{n=0}^{\ell-1}\sum_{m=0}^{\bar{\ell}-1} \Lc_{jj'}x(rn,\bar{r}m)\, \big(\mathbb{H}_{j,n}^1 \otimes \mathbb{H}_{j',m}^2\big) \quad \text{ in }  \ell^2_N(\ZZ)\otimes \ell^2_M(\ZZ)\,.
\]
Then, applying the isomorphism $\Tc^{UV}_{a,b}$ and  \eqref{shift3} one gets
\[
\begin{split}
x&= \sum_{j=1}^s \sum_{j'=1}^{s'}\sum_{n=0}^{\ell-1}\sum_{m=0}^{\bar{\ell}-1}  \Lc_{jj'}x(rn,\bar{r}m)\, \Tc^{UV}_{a,b}\big(\mathbb{H}_{j,n}^1 \otimes \mathbb{H}_{j',m}^2\big) \\
&= \sum_{j=1}^s \sum_{j'=1}^{s'}\sum_{n=0}^{\ell-1}\sum_{m=0}^{\bar{\ell}-1}  \Lc_{jj'}x(rn,\bar{r}m)\, U^{rn} \big(\Tc^U_{a,N} \mathbb{H}_{j,0}^1 \big) \otimes V^{\bar{r}m} \big(\Tc^V_{b,M}  \mathbb{H}_{j',0}^2\big)\\
&=\sum_{j=1}^s \sum_{j'=1}^{s'}\sum_{n=0}^{\ell-1}\sum_{m=0}^{\bar{\ell}-1}  \Lc_{jj'}x(rn,\bar{r}m)\, \big(U^{rn} \otimes V^{\bar{r}m} \big)\, ( c_{j}\otimes d_{j'})\,,\\
\end{split}
\]
where $c_{j}=\Tc^U_{a,N} \mathbb{H}_{j,0}^1 \in \Ac_a$, $j=1, 2, \dots , s$, and $d_{j'}=\Tc^V_{b,M}  \mathbb{H}_{j',0}^2\in \Ac_b$, $j'=1, 2, \dots , s'$. Next, we state the equivalent result to Theorem \ref{samp1}:
\begin{teo}
\label{samp3}
Let $h_{jj'}:=h_{j1}\otimes h_{j'2} \in \Hc_1\otimes \Hc_2$, $j=1,2, \dots,s$, $j'=1,2, \dots,s'$, and let $\Lc_{jj'}$ be the associated $UV$-system  giving the samples of any $x\in\Ac_{a,b}$ as in \eqref{samples3}, $j=1,2, \dots,s$, $j'=1,2, \dots,s'$. The following statements are equivalent:
\begin{enumerate}[(a)]
\item $\text{rank\ $\mathbf{R}_{\mathbf{a},\mathbf{h_1}}$}=N$ and $\text{rank\ $\mathbf{R}_{\mathbf{b},\mathbf{h_2}}$}=M$.
\item There exist $S=ss'$ elements $c_j\otimes d_{j'}$, $j=1,2, \dots, s$ and $j'=1,2, \dots,s'$, in the subspace $\mathcal{A}_{a,b}$ such that the sequence 
$\big\{\big(U^{rn} \otimes V^{\bar{r}m} \big)\, ( c_j\otimes d_{j'})\big\}_{\substack{n=0, 1, \dots, \ell-1;\, j=1, 2, \dots,s\\ m=0, 1, \dots, \bar{\ell}-1;\, j'=1, 2, \dots,s'}}$ is a frame for $\mathcal{A}_{a,b}$, and for any 
$x\in \mathcal{A}_{a,b}$ the expansion
\begin{equation}
\label{framesampling3}
x=\sum_{j=1}^s \sum_{j'=1}^{s'}\sum_{n=0}^{\ell -1} \sum_{m=0}^{\bar{\ell}-1}  \Lc_{jj'}x(rn,\bar{r}m)\, \big(U^{rn} \otimes V^{\bar{r}m} \big)\, ( c_j\otimes d_{j'})
\end{equation}
holds.
\item There exists a frame $\big\{C_{j,j',n,m}\big\}_{\substack{n=0, 1, \dots, \ell-1;\, j=1, 2, \dots,s\\ m=0, 1, \dots, \bar{\ell}-1;\, j'=1, 2, \dots,s'}}$ for $\Ac_{a,b}$ such that, for each $x\in \Ac_{a,b}$, the expansion
\[
x=\sum_{j=1}^s \sum_{j'=1}^{s'}\sum_{n=0}^{\ell -1} \sum_{m=0}^{\bar{\ell}-1} \Lc_{jj'}x(rn,\bar{r}m)\, C_{j,j',n,m}
\]
holds.
\end{enumerate}

\end{teo}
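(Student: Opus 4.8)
The plan is to follow verbatim the architecture used for Theorem \ref{samp1}, exploiting the fact that here the auxiliary space $\ell^2_N(\ZZ)\otimes \ell^2_M(\ZZ)$ is finite-dimensional, which dispenses with the boundedness subtleties present in the infinite-finite case. The first step I would record is that condition $(a)$ is equivalent to the statement that the analysis sequence $\big\{\mathbb{G}^1_{j,n}\otimes \mathbb{G}^2_{j',m}\big\}$ is a frame for $\ell^2_N(\ZZ)\otimes \ell^2_M(\ZZ)$. This follows from the tensor-product characterization of frames (a tensor product of two sequences is a frame if and only if each factor is a frame for its space) combined with Lemma \ref{l3} applied separately to each factor: the sequence $\{\mathbb{G}^1_{j,n}\}$ spans $\ell^2_N(\ZZ)$ precisely when $\operatorname{rank}\mathbf{R}_{\mathbf{a},\mathbf{h_1}}=N$, and $\{\mathbb{G}^2_{j',m}\}$ spans $\ell^2_M(\ZZ)$ precisely when $\operatorname{rank}\mathbf{R}_{\mathbf{b},\mathbf{h_2}}=M$.

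For the implication $(a)\Rightarrow(b)$ the construction carried out immediately before the statement already does the work. Assuming $(a)$, each factor sequence is a frame, so I would select left-inverses of $\mathbf{R}_{\mathbf{a},\mathbf{h_1}}$ and $\mathbf{R}_{\mathbf{b},\mathbf{h_2}}$ to produce the structured dual frames $\{\mathbb{H}^1_{j,n}\}$ and $\{\mathbb{H}^2_{j',m}\}$ as in \cite[Section 3.1]{garcia:15}, form their tensor product (a dual frame by the tensor dual-frame lemma established above), expand $F=(\Tc^{UV}_{a,b})^{-1}x$ against it, and then transport the expansion by applying $\Tc^{UV}_{a,b}$ together with the shifting property \eqref{shift3}. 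This yields \eqref{framesampling3} with $c_j=\Tc^U_{a,N}\mathbb{H}^1_{j,0}$ and $d_{j'}=\Tc^V_{b,M}\mathbb{H}^2_{j',0}$, and the resulting family $\big\{(U^{rn}\otimes V^{\bar{r}m})(c_j\otimes d_{j'})\big\}$ is automatically a frame for $\Ac_{a,b}$, being the image under the isomorphism $\Tc^{UV}_{a,b}$ of a frame. The implication $(b)\Rightarrow(c)$ is immediate: one simply sets $C_{j,j',n,m}:=(U^{rn}\otimes V^{\bar{r}m})(c_j\otimes d_{j'})$.

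The only genuine content lies in $(c)\Rightarrow(a)$, which I would argue exactly as in Theorem \ref{samp1}. Applying $(\Tc^{UV}_{a,b})^{-1}$ to the expansion in $(c)$ and invoking \eqref{sampexpression3} produces, for every element of $\ell^2_N(\ZZ)\otimes \ell^2_M(\ZZ)$, a convergent representation whose coefficients are the inner products against the analysis sequence $\big\{\mathbb{G}^1_{j,n}\otimes \mathbb{G}^2_{j',m}\big\}$. Because the ambient space is finite-dimensional this sequence is trivially Bessel, so \cite[Lemma 5.6.2]{ole:03} forces it to be a frame. The tensor-product characterization then makes each factor a frame for its respective space, and a final application of Lemma \ref{l3} returns the two rank conditions in $(a)$.

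The main point to bear in mind, rather than a real obstacle, is that finite-dimensionality renders the Bessel hypothesis automatic, so—unlike in Theorem \ref{samp1}—no $L^\infty$ assumption on the generating data is required, which is why the statement carries no boundedness hypothesis. The one place demanding a little care is verifying that the left-inverse construction genuinely delivers duals with the shift-compatible block structure, so that the shifting property \eqref{shift3} may be applied to pass from the frame expansion in the auxiliary space to the sampling formula \eqref{framesampling3} in $\Ac_{a,b}$.
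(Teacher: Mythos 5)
Your proposal matches the paper's own argument: the paper proves Theorem~\ref{samp3} exactly as Theorem~\ref{samp1}, with the construction preceding the statement (structured duals $\mathbb{H}^1_{j,n}\otimes\mathbb{H}^2_{j',m}$ from left-inverses, the isomorphism $\Tc^{UV}_{a,b}$, and the shifting property \eqref{shift3}) giving $(a)\Rightarrow(b)$, the implication $(b)\Rightarrow(c)$ being trivial, and $(c)\Rightarrow(a)$ obtained by pulling the expansion back through $\big(\Tc^{UV}_{a,b}\big)^{-1}$, invoking \eqref{sampexpression3} with \cite[Lemma 5.6.2]{ole:03}, and then the tensor-product frame characterization plus Lemma~\ref{l3} on each factor. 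Your observation that finite-dimensionality makes the Bessel hypothesis automatic (hence no $L^\infty$-type assumption appears in the statement) is precisely why the paper drops the boundedness hypotheses here, so your write-up is correct and essentially identical to the intended proof.
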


As in the previous cases, whenever  $s=r$ and $s'=\bar{r}$ in the above theorem we are in the Riesz bases setting necessarily, and condition $(a)$ says that both square matrices $\mathbf{R}_{\mathbf{a},\mathbf{h_1}}$ and $\mathbf{R}_{\mathbf{b},\mathbf{h_2}}$ are invertible.

\subsection*{A representative example}
%%%%%%%%%%%%%%%%%%%%%%%%
In $\Hc_1:=\ell_N^2(\ZZ)$ consider the (circular) shift operator $U: x(p) \mapsto x(p-1)$, and  $a:=(1, 0, \dots, 0) \in \ell_N^2(\ZZ)$ which obviously satisfies $U^Na=a$ and $\Ac_a=\ell_N^2(\ZZ)$. Analogously, in  $\Hc_2:=\ell_M^2(\ZZ)$ consider the (circular) shift operator $V: y(q) \mapsto y(q-1)$, and  $b:=(1, 0, \dots, 0) \in \ell_M^2(\ZZ)$ which obviously satisfies $V^Mb=b$ and $\Ac_b=\ell_M^2(\ZZ)$. 

Given the sequences $\mathbf{h}_{j1}\in \ell_N^2(\ZZ)$, $j=1, 2, \dots, s$, and $\mathbf{h}_{j2}\in \ell_M^2(\ZZ)$, $j'=1, 2, \dots, s'$, and fixing the sampling periods $r\,|\,N$ and $\bar{r}\, | \,M$ (recall that 
$\ell=N/r$ and $\bar{\ell}=M/\bar{r}$), for each $\mathbf{x}$ in $\ell_N^2(\ZZ) \otimes \ell_M^2(\ZZ)=\ell_{N,M}^2(\ZZ^2)$, a double sequence in $\ZZ^2$, $N$-periodic in the first component and $M$-periodic in the second one, we consider its samples defined by
\[
\Lc_{jj'}\mathbf{x}(rn,\bar{r}m):=\sum_{p=0}^{N-1} \sum_{q=0}^{M-1}  \mathbf{x}(p,q)\, \overline{\mathbf{h}_{j1}(p-rn)}\,\overline{\mathbf{h}_{j'2}(q-\bar{r}m)}\,, 
\]
where $n=0,1, \dots, \ell-1$, $m=0, 1, \dots \bar{\ell}-1$, $j=1, 2, \dots, s$ and $j'=1, 2, \dots, s'$. 

\medskip

Under the hypotheses in Theorem \ref{samp3} there will exist sequences $\mathbf{c}_j\in \ell_N^2(\ZZ)$, $j=1, 2, \dots, s$, and $\mathbf{d}_{j'}\in \ell_M^2(\ZZ)$, $j'=1, 2, \dots, s'$, such that for any $\mathbf{x}\in \ell_{N,M}^2(\ZZ^2)$ the sampling expansion \eqref{framesampling3} reads:
\[
\mathbf{x}(p, q)=\sum_{j=1}^s \sum_{j'=1}^{s'}\sum_{n=0}^{\ell-1} \sum_{m=0}^{\bar{\ell}-1} \Lc_{jj'} \mathbf{x}(rn,\bar{r}m)\,\mathbf{c}_j(p-rn)\,\mathbf{d}_{j'}(q-\bar{r}m)\,,
\]
where $0\le p \le N-1$ and $0\le q \le M-1$.

%%%%%%%%%%%%%%%%%%%%%%%
%%%%%%%%%%%%%%%%%%%%%%%

\section{Discussion and conclusions}
\label{section6}
%%%%%%%%%%%%%%%%%%%%

A sampling theory for tensor products of unitary invariant subspaces that allows to merge the cases of finitely/infinitely generated unitary invariant subspaces formerly studied in the mathematical literature is derived.   The involved samples are identified as frame coefficients in suitable tensor product spaces, thus it also allows to introduce  the several variables case in the formalism.

Alternatively, the theory developed here can also be considered as a theory of invariant subspaces in the tensor product of the corresponding Hilbert space with respect to the canonical unitary tensor product representation of the product group $\ZZ \otimes \ZZ_p$ defined from the corresponding factors (in the case of infinite-finite generators).   

In this sense, the results exhibited here point again in the direction recently started in the paper \cite{garcia3:15} and natural generalizations of them, like sampling theorems for tensor products of invariant subspaces with respect to unitary representations of finite or infinite groups on each factor, can be addressed using similar ideas.

Reduction techniques corresponding to the decomposition of tensor products of unitary representations into their irreducible components with respect to proper subgroups of the product group, could also be used to simplify the construction of samples, i.e., to find adapted frames.   This, as well as other applications beyond classical telecommunications all involving sampling of states of quantum systems,  will be the subject of further research.

\medskip

\noindent{\bf Acknowledgments:} This work has been supported by the grant MTM2014-54692-P from the Spanish {\em Ministerio de Econom\'{\i}a y Competitividad (MINECO)}.

\vspace*{0.3cm}
%%%%%%%%%%%%%%%%%%%%%%%%%%%%%%%%%%%%%%%%%%%%%%%%%%%%%%%%%%%%%%%%%%%%%

%%%%%%%%%%%%%%%%%%%%%%%%%
\end{document}